\newcommand{\Z}{\mathbb{Z}}
\newcommand{\zz}{\mathbb{Z}_2}
\DeclareMathOperator{\ad}{ad}
\renewcommand \Im{\mathrm{Im}\,}
\newcommand{\g}{\mathfrak{g}}
\newcommand{\h}{\mathfrak{h}}
\newcommand{\m}{\mathfrak{m}}
\newcommand{\Span}{\mathrm{Span}}
\newcommand{\D}{\mathcal{D}}
\newtheorem*{rep@theorem}{\rep@title}
\newcommand{\newreptheorem}[2]{%
\newenvironment{rep#1}[1]{%
\def\rep@title{#2 \ref{##1}}%
\begin{rep@theorem}}%
{\end{rep@theorem}}}
\theoremstyle{plain}
\newtheorem{theorem}{Theorem}
\newtheorem{proposition}{Proposition}
\newtheorem{lemma}{Lemma}
\newtheorem{corollary}{Corollary}
\newtheorem*{theorem*}{Theorem}
\newtheorem*{corollary*}{Corollary}
\theoremstyle{definition}
\newtheorem*{definition*}{Definition}
\theoremstyle{remark}
\newtheorem{remark}{Remark}
\newtheorem{example}{Example}
\newtheorem*{example*}{Example}
\newtheorem*{exercise*}{Exercise}
\begin{document}

\title[Cohomology of $\mathbb{N}$-graded Lie algebras of maximal class over $\Z_2$]{Cohomology of $\mathbb{N}$-graded Lie algebras of maximal class over $\Z_2$}

\author{Yuri Nikolayevsky}
\author{Ioannis Tsartsaflis}

\address{Department of Mathematics and Statistics, La Trobe University, Melbourne, Australia 3086}
\email{Y.Nikolayevsky@latrobe.edu.au}
\email{itsartsaflis@students.latrobe.edu.au}

\subjclass[2010]{17B56, 17B50, 17B70, 17B65, 17B30}


\keywords{Lie algebra of maximal class, characteristic 2, cohomology, Betti number} 

\thanks{The first named author was partially supported by ARC Discovery grant DP130103485}


\begin{abstract}
We compute the cohomology with trivial coefficients of Lie algebras $\m_0$ and $\m_2$ of maximal class over the field $\Z_2$. In the infinite-dimensional case, we show that the cohomology rings $H^*(\m_0)$ and $H^*(\m_2)$ are isomorphic, in contrast with the case of the ground field of characteristic zero, and we obtain a complete description of them. In the finite-dimensional case, we find the first three Betti numbers of $\m_0(n)$ and $\m_2(n)$ over $\zz$.
\end{abstract}

\maketitle


\section{Introduction}
\label{s:intro}

A Lie algebra $\g$ is said to be \emph{$\mathbb{N}$-graded}, if it is the direct sum of subspaces $\g_i, \; i \in \mathbb{N}$ (the \emph{homogeneous components}), such that $[\g_i, \g_j] \subset \g_{i+j}$. Obviously, finite-dimensional $\mathbb{N}$-graded Lie algebras are necessarily nilpotent. A great deal of attention in the literature has been focused on $\mathbb{N}$-graded Lie algebras for which the homogeneous components $\g_i$ are ``the smallest possible", that is, all of dimension one or, in the finite-dimensional case, $\dim \g_i =1$, for $i \le n:=\dim \g$, and $\g_i =0$, for $i > n$. With the additional condition that $\g$ is generated as an algebra by elements $e_1$ and $e_2$, spanning $\g_1$ and $\g_2$ respectively, one obtains that the subspaces $C_0=\g, \; C_k = \oplus_{i=k+2}^\infty \g_i, \; k >0$, are the terms of the central descending series. This defines the $\mathbb{N}$-graded filiform Lie algebras in the finite-dimensional case \cite{Vergne} and the $\mathbb{N}$-graded Lie algebras of maximal class \cite{SZ} (also called \emph{narrow} algebras). In characteristic zero, these algebras have been completely classified. In the infinite-dimensional case, one gets just three algebras \cite{Fial}, and independently \cite[Theorem~7.1]{SZ}. We list them here with their presentations:
\begin{align}\label{eq:m0def}
    & \m_0 = \Span (e_1, e_2, \dots), & \quad & [e_1, e_i] = e_{i+1}, \; i > 1, \\
    & \m_2 = \Span (e_1, e_2, \dots), & \quad & [e_1, e_i] = e_{i+1}, \; i > 1, \quad [e_2, e_j] = e_{j+2}, \; j > 2, \label{eq:m2def}\\
    & \mathcal{V} = \Span (e_1, e_2, \dots), & \quad & [e_i, e_j] = (j-i) e_{i+j}, \; i, j \ge 1.  \label{eq:vdef}
\end{align}
In the finite-dimensional case in characteristic zero, the classification of finite-dimensional $\mathbb{N}$-graded filiform Lie algebras was established in \cite{million}: one obtains the ``truncations" of the above three algebras, in particular,
\begin{align}\label{eq:m0ndef}
    & \m_0(n) = \Span (e_1, \dots, e_n), \; [e_1, e_i] = e_{i+1}, \; 1 < i < n, \\
    & \m_2(n) = \Span (e_1, \dots, e_n), \; [e_1, e_i] = e_{i+1}, \; 1 < i < n, \; [e_2, e_j] = e_{j+2}, \; 2 < j < n - 1 , \label{eq:m2ndef}
\end{align}
and $\mathcal{V}(n)$, plus another three infinite series, and five one-parameter families of low-dimensional algebras. The picture is more complicated in positive characteristic: by \cite{max1}, there are uncountably many isomorphism classes of Lie algebras of maximal class; the construction of all such algebras in odd characteristic is given in \cite{max2}, and in characteristic two, in \cite{max3}, with $\m_0$ and $\m_2$ being the simplest possible cases.


The cohomology of $\mathbb{N}$-graded Lie algebras of maximal class has been studied extensively over a field of characteristic zero \cite{Fial, FM, Vergne}, and at present is well-understood. In \cite{FM}, Fialowski and Millionschikov gave a full description of the cohomology with trivial coefficients of the algebras $\m_0$ and $\m_2$; the Betti numbers of $\mathcal{V}$ are found in \cite{goncharova}. In the finite-dimensional case, the cohomology of $\m_0(n)$ were found in \cite{Bord} (see also \cite{AS} and \cite{FM}). However, already for $\m_2(n)$ over a field of characteristic zero, our present knowledge is limited to the first two Betti numbers \cite{million, Vergne}.

The study of the cohomology of Lie algebras of maximal class over fields of positive characteristic is much less developed. The cohomology of the Heisenberg algebra is found in \cite{CJ, skol}. A recent result by Tsartsarflis \cite{Ts} states that over a field of characteristic two, the algebras $\m_0(n)$ and $\m_2(n)$ have the same Betti numbers (in contrast with the case of characteristic zero), and furthermore, every algebra of the so called \emph{Vergne class} admits a dual, non-isomorphic algebra, with the same Betti numbers.


In this paper we study the cohomology with trivial coefficients of the Lie algebras $\m_0$ and $\m_2$, and their finite dimensional truncations, $\m_0(n)$ and $\m_2(n)$, over the field $\Z_2$. Let $V = \Span(e_1, e_2, \dots)$ and let $\{e^i\}$ be the dual basis for $V^*$. Define the operator $D_1$ on $V^*$ by $D_1e^1=D_1e^2=0, \; D_1e^i = e^{i-1}$, for $i > 2$, and extend it to $\Lambda(V)$ as a derivation. For $\omega \in \Lambda(V)$ and $e^i \in V^*$, define $F(\omega,e^i)=\sum_{l=0}^\infty (D_1^l \omega) \wedge e^{i+l+1}$ (note that the sum on the right-hand side is finite).

Our main result in the infinite-dimensional case is as follows.

\begin{theorem}\label{t:main}
The cohomology rings $H^*(\m_0)$ and $H^*(\m_2)$ over the field $\Z_2$ are isomorphic. The respective cohomology classes of the cocycles
\begin{equation} \label{eq:classes}
e^1, \; e^2, \; F(e^{i_1}\wedge e^{ i_2}\wedge  \dots\wedge  e^{i_q},e^{i_q}),
\end{equation}
where $q \ge 1, \; 2\le i_1 {<}i_2{<}{\dots} {<}i_q$, form a basis for $H^*(\m_0)$ and for $H^*(\m_2)$, respectively.
\end{theorem}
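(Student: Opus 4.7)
The plan is to reduce the theorem to a computation of $H^*(\m_0)$ by producing an explicit chain isomorphism between $(\Lambda V^*, d_0)$ and $(\Lambda V^*, d_2)$ that fixes each of the cocycles in the statement. In characteristic two both Chevalley--Eilenberg differentials can be written as derivations of $\Lambda V^*$:
\[
d_0 = L_{e^1}\circ D_1, \qquad d_2 = L_{e^1}\circ D_1 + L_{e^2}\circ D_1^2,
\]
where $L_{e^k}$ denotes exterior multiplication by $e^k$ and $D_1^2$ is itself a derivation (the square of an even derivation is a derivation when $2=0$). In this picture $D_1$ commutes with each $L_{e^k}$ and with the contraction $\iota_{e_1}$ by $e_1$, while $\iota_{e_1}L_{e^1}+L_{e^1}\iota_{e_1}=\mathrm{id}$. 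Setting $\phi=\mathrm{id}+L_{e^2}\iota_{e_1}D_1$, a direct computation using these identities yields $\phi\,d_0=d_2\,\phi$ and $\phi^2=\mathrm{id}$, giving the desired involutive chain isomorphism.

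The listed elements are cocycles fixed by $\phi$: a short telescoping calculation gives $D_1 F(\omega,e^i)=\omega\wedge e^i$ for every monomial $\omega$, so specializing to $\omega=e^{i_1}\wedge\cdots\wedge e^{i_q}$ and $i=i_q$ forces $D_1 F(\omega,e^{i_q})=0$; hence $d_0F=L_{e^1}D_1F=0$, $d_2F=L_{e^2}D_1^2F=0$, and $\phi F=F$. Similarly $e^1,e^2$ are cocycles fixed by $\phi$. It therefore suffices to prove the basis claim for $H^*(\m_0)$. Writing any $\eta\in\Lambda V^*$ uniquely as $\alpha+e^1\wedge\beta$ with $\alpha,\beta\in\Lambda\hat V^*$ (where $\hat V^*=\Span(e^2,e^3,\dots)$), one computes $d_0\eta=e^1\wedge D_1\alpha$, and hence
\[
H^*(\m_0)\;\cong\;\ker(D_1|_{\Lambda\hat V^*})\;\oplus\;e^1\wedge\bigl(\Lambda\hat V^*/\mathrm{im}(D_1|_{\Lambda\hat V^*})\bigr).
\]

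The combinatorial heart of the proof is to show that $D_1$ is surjective on $\Lambda^q\hat V^*$ for every $q\geq 1$ (so the second summand contributes only the class of $e^1$ in degree~$1$) and that the $F$-cocycles span $\ker(D_1|_{\Lambda^{q+1}\hat V^*})$. I would work on each finite-dimensional weight-graded piece $(\Lambda^{q+1}\hat V^*)_w$, splitting it as $T_+^w\oplus T_-^w$, where $T_+^w$ (resp.\ $T_-^w$) is spanned by the monomials whose top two indices are (resp.\ are not) consecutive. The map on monomial bases $(j_1,\dots,j_{q+1})\mapsto(j_1,\dots,j_q,j_{q+1}-1)$ is a bijection from $T_-^w$ to $(\Lambda^{q+1}\hat V^*)_{w-1}$, so dimensions match. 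Ordering the basis of $(\Lambda^{q+1}\hat V^*)_{w-1}$ by increasing largest index, one observes that for each monomial $m\in T_-^w$ only the ``decrement the top index'' summand of $D_1m$ has strictly smaller largest index than $m$; hence $D_1|_{T_-^w}$ has a triangular matrix with $1$s on the diagonal in these bases and is an isomorphism onto $(\Lambda^{q+1}\hat V^*)_{w-1}$. This simultaneously yields surjectivity of $D_1$ on $(\Lambda^{q+1}\hat V^*)_w$ and $\dim\ker(D_1|_{(\Lambda^{q+1}\hat V^*)_w})=\dim T_+^w$. Since the leading term of $F(e^{i_1}\wedge\cdots\wedge e^{i_q},e^{i_q})$ is $e^{i_1}\wedge\cdots\wedge e^{i_q}\wedge e^{i_q+1}\in T_+^w$ and these leading terms are pairwise distinct across the $F$-cocycles, a dimension count identifies the $F$-cocycles with a basis of the kernel.

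The main obstacle will be the combinatorial argument in the third paragraph: choosing the right splitting, bijection and ordering so that $D_1|_{T_-^w}$ really becomes triangular with $1$s on the diagonal, and packaging surjectivity and the kernel description into a single linear-algebraic fact. Once this is in hand, $\phi$ transports the resulting basis of $H^*(\m_0)$ verbatim to a basis of $H^*(\m_2)$, completing the proof.
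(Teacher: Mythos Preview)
Your approach is essentially the paper's. Your involution $\phi=\mathrm{id}+L_{e^2}\iota_{e_1}D_1$ is exactly the map $f$ defined in Section~\ref{s:m2} (write $\omega=e^1\wedge\xi+e^2\wedge\eta+\zeta$ and compute $\phi\omega=\omega+e^2\wedge D_1\xi$), and your reduction $H^*(\m_0)\cong\ker(D|_{\Lambda\h^*})\oplus e^1\wedge(\Lambda\h^*/\mathrm{im}\,D)$ together with the $T_+^w/T_-^w$ triangularity argument is a repackaging of Lemma~\ref{l:kerD}: the paper obtains surjectivity from the identity $DF(\omega,e^i)=\omega\wedge e^i$ and then does the same dimension count via the ``decrement the top index'' operator $A$, whereas you combine both into one triangular-matrix statement. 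Either works.

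There is, however, one point you omit: the theorem asserts an isomorphism of \emph{rings}, and your $\phi$ is not multiplicative, so the fact that it carries one basis of cocycles to the same basis does not by itself give a ring isomorphism. The paper fills this gap as follows: since $\phi$ restricts to the identity on $\Lambda(e^2,e^3,\dots)$, any relation $a\wedge b=c+d_0\omega$ among the listed cocycles with $a,b\neq e^1$ yields $a\wedge b=c+d_2(\phi\omega)$ via the intertwining; and multiplication by $e^1$ is trivial in both rings (for $\m_0$ because $D$ is surjective, and for $\m_2$ because $e^1\wedge F=\phi(e^1\wedge F)$ whenever $DF=0$, hence is again a $d_2$-coboundary). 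You should add this short argument.
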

Note that $H^*(\m_0)$ over $\zz$ is ``the same" as over a field of characteristic zero (compare with \cite[Theorem~3.4]{FM}). In contrast, the fact that $H^*(\m_0)$ and $H^*(\m_2)$ over $\zz$ are isomorphic (note that $\m_0$ and $\m_2$ are not isomorphic over any ground field) is specific to the $\zz$ case: over a field of characteristic zero, $H^*(\m_2)$ is very different \cite[Theorem~5.5]{FM}.

In the finite-dimensional case, which appears to be substantially harder that the infinite-dimensional one, we compute the first three Betti numbers of $\m_0(n)$ and the corresponding bases for $H^i(\m_0(n)), \; i=1,2,3$.

\begin{theorem}\label{t:b3m0z2}
    The first three Betti numbers of the Lie algebra $\m_0(n)$ over $\Z_2$ are given by
    \begin{enumerate}[{\rm (a)}]
    \item \label{it:b1} $b_1(\m_0(n))=2$,
    \item \label{it:b2} $b_2(\m_0(n))=\lfloor\frac12(n+1)\rfloor$, where $\lfloor.\rfloor$ denotes the integer part,
    \item \label{it:b3} $b_3(\m_0(n)) = \tfrac13 (2^p-1)(2^{p-1}-1) + \tfrac12 m(m-1) + \lfloor\tfrac12 (n-1)\rfloor$,
     where $n=2^p+m$ and $0<m \le 2^p$. 
   \end{enumerate}
\end{theorem}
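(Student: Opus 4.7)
My plan is to exploit the $\mathbb{N}$-grading on $\m_0(n)$ (with $\deg e_i = i$), which makes the Chevalley--Eilenberg complex bigraded: $C^q = \bigoplus_w C^q_w$, where $w = i_1 + \cdots + i_q$ is the weight of $e^{i_1} \wedge \cdots \wedge e^{i_q}$. Since $d$ preserves weight, $b_q = \sum_w \dim H^{q,w}(\m_0(n))$, and all three parts will be computed weight by weight. Part (a) is immediate from $de^1 = de^2 = 0$ and $de^i = e^1 \wedge e^{i-1}$ for $i \geq 3$, which give $Z^1 = \Span(e^1, e^2)$.

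For part (b), I would exhibit a basis of $H^2(\m_0(n))$ consisting of the cocycles $F(e^i, e^i) = \sum_{l=0}^{i-2} e^{i-l} \wedge e^{i+l+1}$ from Theorem~\ref{t:main} (these are well-defined in $\m_0(n)$ precisely when the largest index $2i - 1$ is at most $n$, yielding $\lfloor (n+1)/2 \rfloor - 1$ classes for $2 \le i \le \lfloor(n+1)/2\rfloor$), together with the finite-dimensional ``truncation'' cocycle $e^1 \wedge e^n$, which has weight $n+1$ and is not a coboundary since $de^{n+1}$ does not exist in $\m_0(n)$. Linear independence follows from a lex-ordering on basis monomials; spanning follows from a weight-by-weight rank count of $d\colon C^2_w \to C^3_w$ using the known coboundaries $B^2 = \Span\{e^1 \wedge e^{j} : 2 \le j \le n-1\}$.

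Part (c) is the substantive content. Using the derivation property of $D_1$ in characteristic $2$, the 3-cocycles of Theorem~\ref{t:main} expand as
\[
F(e^{i_1} \wedge e^{i_2}, e^{i_2}) = \sum_{l,k} \binom{l}{k} \, e^{i_1-k} \wedge e^{i_2-l+k} \wedge e^{i_2+l+1},
\]
with $\binom{l}{k} \pmod 2$ controlled by Lucas's theorem. This cocycle lives in $\m_0(n)$ precisely when the tail index $i_2 + l + 1$ stays $\le n$ for every $(l, k)$ with $\binom{l}{k}$ nonzero mod $2$ and with $i_1 - k \ge 2$, $i_2 - l + k \ge 2$. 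Writing $n = 2^p + m$ with $0 < m \le 2^p$ is designed to organize these binary constraints: pairs $(i_1, i_2)$ whose ``reach'' stays inside $[2, 2^p + 1]$ contribute the ``generic'' count $\tfrac{1}{3}(2^p - 1)(2^{p-1} - 1)$ (via a binomial identity summing over $2$-adic valuations of index differences), while pairs reaching into the truncation range $(2^p, n]$ contribute the correction $\tfrac{1}{2}m(m-1)$. The remaining $\lfloor(n-1)/2\rfloor$ classes come from truncation 3-cocycles analogous to $e^1 \wedge e^n$ in part (b): classes of the form $e^1 \wedge e^j \wedge e^n$ (automatic cocycles since $de^1 = 0$) whose would-be coboundary preimages require indices outside $\m_0(n)$.

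Independence and spanning will follow from a weight-by-weight computation of $\dim Z^3_w$ (via the rank of $d\colon C^3_w \to C^4_w$) combined with the explicit $\dim B^3_w = \dim d(C^2_w)$, whose components were already tabulated in part (b). The main obstacle I anticipate is the combinatorial bookkeeping for the ``generic'' term $\tfrac{1}{3}(2^p-1)(2^{p-1}-1)$: converting the Lucas-theoretic non-vanishing conditions on $\binom{l}{k}$ into a closed-form count, and partitioning the cocycles cleanly enough that the three families in the statement are counted without overlap across the boundary weight $w \approx 2^p$.
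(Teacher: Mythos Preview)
Your plan for parts (a) and (b) is fine and essentially matches the paper's Proposition~\ref{p:H1H2m0n}.

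For part (c), however, there is a genuine gap. Your strategy is to count those individual cocycles $F(e^{i_1}\wedge e^{i_2},e^{i_2})$ from the infinite-dimensional algebra whose monomials all have indices $\le n$, and to partition these pairs $(i_1,i_2)$ according to whether their ``reach'' stays inside $[2,2^p{+}1]$ or extends into the truncation range. But the kernel of $D$ on $\Lambda^3(\h)$ for $\m_0(n)$ is \emph{not} spanned by those $F$'s that individually fit: it is the intersection of $\Span\{F(e^{sr},e^r):2\le s<r\}$ with $\Lambda^3(e_2,\dots,e_n)$, and this intersection genuinely contains linear combinations whose summands do not individually lie in $\m_0(n)$. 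A concrete example occurs already at $n=8$: neither $F(e^{46},e^6)$ nor $F(e^{27},e^7)$ lies in $\Lambda^3(\m_0(8))$ (each contains the term $e^{269}$), but their sum
\[
F(e^{46},e^6)+F(e^{27},e^7)=e^{467}+e^{368}+e^{458}+e^{278}
\]
does, and it is one of the seven basis vectors of $\ker D$ for $\m_0(8)$. A pair-count of individually-fitting $F$'s will therefore systematically undercount $\dim\ker D$, and there is no reason the discrepancy should be absorbed into your ``correction'' term $\tfrac12 m(m-1)$.

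What the paper does instead is to set up a recursion in $n$: writing $\omega=e^n\wedge\xi+\omega'$ with $\omega'\in\Lambda^3(e_2,\dots,e_{n-1})$, the question becomes for which $k$ (with $\xi=F(e^k,e^k)$) there exists a compensating $\omega'$ making $\omega$ closed. This reduces to whether a certain explicit linear system over $\Z_2$, with entries $\binom{n-r+2(i-1)}{r+(i-1)-k}\bmod 2$, is solvable; the answer is ``precisely for $2\le k\le m$'', which yields $d_n=d_{n-1}+(m-1)$ and hence the closed form. The solvability analysis (exhibiting a row dependence when $k>m$ and an explicit solution when $k\le m$) is where the real Lucas/Kummer work lives, and it is not captured by counting which single $F$'s fit.
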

An explicit form of the basis for $H^3(\m_0(n))$ is given in Theorem~\ref{t:3cocyclesm0n} of Section~\ref{s:m0}. Theorem~\ref{t:b3m0z2} also gives us the first three Betti numbers of $\m_2(n)$ (Corollary~\ref{cor:m2n} of Section~\ref{s:m2}), which in characteristic two are simply the same as those for $\m_0(n)$, by \cite[Theorem~1]{Ts}.


The paper is organised as follows. We begin with some short preliminaries in Section~\ref{s:pre}. We treat the algebras $\m_0$ and $\m_0(n)$ in Section~\ref{s:m0}. Parts \eqref{it:b1} and \eqref{it:b2} of Theorem~\ref{t:b3m0z2} follow from Proposition~\ref{p:H1H2m0n}. After some technical preparation similar to the arguments of \cite{FM}, we prove Theorem~\ref{t:main_H_m_0}, which is ``the $\m_0$-part" of Theorem~\ref{t:main}. We then proceed to the proof of Theorem~\ref{t:b3m0z2}\eqref{it:b3}.  This is the longest and most technically involved part of the paper. Finally, in Section~\ref{s:m2} we use a construction similar to \cite{Ts} to establish the isomorphism between $H^*(\m_0)$ and $H^*(\m_2)$, hence completing the proof of Theorem~\ref{t:main}.

\section{Preliminaries}
\label{s:pre}

Given a Lie algebra $\g$ over $\Z_2$ with a basis elements $e_i$, we denote the dual basis elements $e^i$. For convenience, we set $e^0=0$. For simplicity we write a monomial $q$-form $e^{i_1} \wedge e^{i_2} \wedge \dots \wedge e^{i_q} \in \Lambda^q(\g)$ as $e^{i_1i_2 \dots i_q}$. For a monomial $e^{i_1 i_2 \dots i_q}$, its \emph{degree} is defined to be $\sum_{j=1}^q i_j$. The \emph{homogeneous component} $\Lambda^q_k(\g)$ of degree $k$ and of rank $q$ is the span of all the monomials of degree $k$ and of rank $q$. We set $\Lambda_k(\g):=\oplus_q \Lambda^q_k(\g)$.

As usual, the \emph{differential} $d$ is defined by $d\xi(X,Y)=\xi[X,Y]$ for one-forms $\xi$, where $X,Y \in \g$, and then is extended to the exterior algebra $\Lambda(\g)$ as a derivation (so that $d(\omega_1 \wedge \omega_2)=d(\omega_1) \wedge \omega_2 + \omega_1 \wedge d(\omega_2)$). Then $d^2=0$ and one define the \emph{$q$-th cohomology group $H^q(\g)$} (with trivial coefficients) by $H^q(\g)=\ker(d:\Lambda^q \to \Lambda^{q+1})/\Im(d:\Lambda^{q-1} \to \Lambda^q)$. Then $H^q(\g)$ is a linear space over $\Z_2$; if its dimension is finite, it is called the \emph{$q$-th Betti number} $b_q(\g)$. It is immediate from the definition that if $\dim \g = n$, then
\begin{equation}\label{eq:dimker}
    b_q(\g) = \dim \ker(d:\Lambda^q \to \Lambda^{q+1}) + \dim \ker(d:\Lambda^{q-1} \to \Lambda^q) - \binom{n}{q-1},
\end{equation}
so to compute the Betti numbers it suffices to know the dimensions of the kernels of $d$ on the $\Lambda^q$'s. Also note that in the graded case (in particular, for the bases $\{e_i\}$ from (\ref{eq:m0def} -- \ref{eq:m2ndef})), the operator $d$ maps $\Lambda^q_k(\g)$ to $\Lambda^{q+1}_k(\g)$, and so $H^q(\g)$ is spanned by the classes of homogeneous elements; we get a decomposition (a bi-gradation) $H^q(\g)=\oplus_k H^q_k(\g)$. The multiplicative structure in $H(\g):=\oplus_q H^q(\g) $ is inherited from the wedge product.


\section{Cohomology of $\m_0$}
\label{s:m0}

In this section, we compute the cohomology of the infinite-dimensional Lie algebra $\m_0$ and also the first three Betti numbers of the finite-dimensional Lie algebras $\m_0(n)$ defined as follows (\ref{eq:m0def}, \ref{eq:m0ndef}):
\begin{gather*}
    \m_0=\Span(e_1,e_2,e_3, \dots), \quad [e_1,e_i]=e_{i+1}, \; \text{ for } i \ge 2, \\
    \m_0(n)=\Span(e_1,e_2,e_3, \dots, e_n), \quad [e_1,e_i]=e_{i+1}, \; \text{ for } 2 \le i \le n-1.
\end{gather*}

In the first few paragraphs, we closely follow the approach and the results of \cite[Section~3]{FM}, adapting them to the case of the ground field $\Z_2$. In effect, the outcome is that in the infinite-dimensional case, for $\g=\m_0$, the cohomology is ``the same" as that for a field of characteristic zero, while in the finite-dimensional case, for $\g=\m_0(n)$, the situation is more delicate -- not only the Betti numbers are different, but also the methods of \cite{FM, AS} and the very elegant approach of \cite[Appendix~B]{Bord} do not work directly.

For a monomial $e^{i_1 i_2 \dots i_q} \in \Lambda^q(\g), \; q \ge 1, \; i_1, i_2, \dots, i_q \ge 1$, (for both $\g=\m_0$ and $\g=\m_0(n)$) we have
\begin{equation}\label{eq:dm0}
\begin{split}
    d(e^{i_1 i_2 \dots i_q}) &=e^{1(i_1-1) i_2 \dots i_q}+e^{1i_1 (i_2-1) \dots i_q}+ \dots + e^{1i_1 i_2 \dots (i_q-1)}\\
    &=e^1 \wedge \big(e^{(i_1-1) i_2 \dots i_q}+e^{i_1 (i_2-1) \dots i_q}+ \dots + e^{i_1 i_2 \dots (i_q-1)}\big).
\end{split}
\end{equation}
It follows from \eqref{eq:dm0} that the subspaces $\Lambda_k(\g)$ are $d$-invariant.

Moreover, for any $\omega \in \Lambda(\g)$ we have $d(e^1 \wedge \omega)=0$ and $d(\omega) \in e^1 \wedge \Lambda(\g)$. Set $\h:=\Span(e_2, e_3, \dots)$ for $\m_0$, and $\h:=\Span(e_2, e_3, \dots, e_n)$ for $\m_0(n)$. Then $\h$ is abelian and from \eqref{eq:dm0} it follows that there is a well-defined linear operator $D$ on $\Lambda(\h)$ such that for $\omega \in \Lambda(\h)$, we have
\begin{equation}\label{eq:defD}
d\omega=e^1 \wedge (D\omega).
\end{equation}
It is easy to see that
\begin{equation}\label{eq:DLh}
    De^2=0, \; De^i = e^{i-1} \text{ for } i >2, \qquad D(\xi \wedge \eta)= D(\xi) \wedge \eta + \xi \wedge D(\eta) \text{ for } \xi, \eta \in \Lambda(\h),
\end{equation}
so $D$ is a derivation of $\Lambda(\h)$. Recall that the \emph{Lie derivative} with respect to $e_1$ is defined by taking the operator $(\ad_{e_1})^*$ on $\g^*$ to be the dual to $\ad_{e_1}$ on $\g$, and then extending it as a derivation to $\Lambda(\g)$.  Note that $D$ is just the restriction of $(\ad_{e_1})^*$ to $\Lambda(\h)$. Furthermore, $D(\Lambda^q_k(\h)) \subset \Lambda^q_{k-1}(\h)$, so that $D$ is ``nilpotent": for any $\omega \in \Lambda(\h)$ there exists $N=N(\omega) \ge 0$ such that $D^N \omega = 0$. For convenience, we define $D^0$ to be the identity map.

Since from \eqref{eq:dm0}, $\ker d = e^1 \wedge \Lambda(\h) \oplus \ker D$, to find the kernel of $d$ we need to find the kernel of $D$. This is given by the following lemma.

\begin{lemma}\label{l:kerD}
\begin{enumerate}[{\rm (a)}]
  \item \label{it:kerD}
  Let $\g=\m_0$. For any $\omega \in \Lambda(\h)$ and $e^i \in \h$ define
  \begin{equation}\label{eqFomegaei}
    F(\omega,e^i)= \sum\nolimits_{l = 0}^\infty D^l \omega \wedge e^{i+1+l}= \sum\nolimits_{l = 0}^{N(\omega)-1} D^l \omega \wedge e^{i+1+l}.
  \end{equation}
  Then $F(\omega,e^i) \in \ker D$ for $\omega \wedge e^i=0$ and moreover, the elements
  \begin{equation}\label{eq:Dbasis}
  \begin{gathered}
    F(e^{i_1 i_2 \dots i_q},e^{i_q})=e^{i_1 i_2 \dots i_q i_{q}+1} + D e^{i_1 i_2 \dots i_q} \wedge e^{i_q+2} + \dots \in \Lambda^{q+1}_k(\h), \\
    \text{where } q \ge 1, \; 2 \le i_1 < i_2 < \dots < i_q, \; k=  i_q+1 +\sum\nolimits_{j=1}^q i_j ,
  \end{gathered}
  \end{equation}
  form a basis for the kernel of the restriction of $D$ to $\Lambda^{q+1}_k(\h)$; the kernel of the restriction of $D$ to $\h^*$ is spanned by $e^2$.

  \item \label{it:kerDn}
  Let $\g=\m_0(n)$, viewed as the subspace of $\m_0$ spanned by the first $n$ vectors. Then $\ker D$ is the intersection of $\ker D$ constructed in \eqref{it:kerD} for the case $\g=\m_0$ with $\m_0(n)$.
\end{enumerate}
\end{lemma}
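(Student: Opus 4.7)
The plan is to extract everything from one direct calculation. Using that $D$ is a derivation and that $De^{i+1+l} = e^{i+l}$ for $i \ge 2$ and $l \ge 0$, I would first expand
\[
DF(\omega, e^i) = \sum_{l \ge 0} D^{l+1}\omega \wedge e^{i+1+l} + \sum_{l \ge 0} D^l\omega \wedge e^{i+l}.
\]
Shifting the summation variable in the first sum rewrites it as $\sum_{l \ge 1} D^l\omega \wedge e^{i+l}$, which agrees with the $l \ge 1$ part of the second sum. Over $\Z_2$ these cancel, leaving the identity $DF(\omega, e^i) = \omega \wedge e^i$. Thus $F(\omega, e^i) \in \ker D$ iff $\omega \wedge e^i = 0$, which holds for $F(e^{i_1\dots i_q}, e^{i_q})$ because the factor $e^{i_q}$ is repeated.

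This same identity drives the spanning argument via a dimension count. For any monomial $\tau = e^{j_1\dots j_{q+1}} \in \Lambda^{q+1}_{k-1}(\h)$, the element $F(e^{j_1\dots j_q}, e^{j_{q+1}})$ lies in $\Lambda^{q+1}_k(\h)$ and satisfies $DF = \tau$, so $D\colon \Lambda^{q+1}_k(\h) \to \Lambda^{q+1}_{k-1}(\h)$ is surjective and rank--nullity gives
\[
\dim \ker D|_{\Lambda^{q+1}_k(\h)} = \dim \Lambda^{q+1}_k(\h) - \dim \Lambda^{q+1}_{k-1}(\h).
\]
On the combinatorial side, the map $e^{j_1\dots j_q j_{q+1}} \mapsto e^{j_1\dots j_q (j_{q+1}-1)}$ is a bijection from the monomials of $\Lambda^{q+1}_k(\h)$ with $j_{q+1} > j_q + 1$ onto all monomials of $\Lambda^{q+1}_{k-1}(\h)$; hence the number of monomials of $\Lambda^{q+1}_k(\h)$ with $j_{q+1} = j_q + 1$ equals the right-hand side above. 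These ``consecutive top-two'' monomials are in bijection with the proposed $F$-basis (via $j_s \leftrightarrow i_s$ for $s \le q$ and $j_{q+1} = i_q + 1$), so the counts match.

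For linear independence, I would observe that $D$ never raises indices, so each summand $D^l e^{i_1\dots i_q} \wedge e^{i_q + 1 + l}$ of $F(e^{i_1\dots i_q}, e^{i_q})$ has top index $i_q + 1 + l$ and second-largest index at most $i_q$; the two are consecutive only at $l = 0$. Hence any consecutive-top-two monomial appearing in a linear combination of the $F$-basis elements comes from a unique leading term, forcing the corresponding coefficient to vanish. The case $q+1 = 1$, asserting $\ker D \cap \h^* = \Span(e^2)$, is immediate from $De^2 = 0$ and $De^i = e^{i-1}$ for $i > 2$. Part \eqref{it:kerDn} is then tautological: since $\Lambda(\m_0(n))$ is a $D$-invariant subspace of $\Lambda(\m_0)$ on which $D$ acts by restriction, $\ker D|_{\Lambda(\m_0(n))} = \ker D|_{\Lambda(\m_0)} \cap \Lambda(\m_0(n))$. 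I expect the only non-formal ingredient to be the surjectivity of $D$ onto each lower-degree piece, which could easily fail for a generic nilpotent derivation but here succeeds because the explicit formula for $F$ supplies on-the-nose preimages.
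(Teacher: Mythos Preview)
Your proof is correct and follows essentially the same route as the paper's: the identity $DF(\omega,e^i)=\omega\wedge e^i$ is obtained by the identical telescoping computation, the dimension count proceeds via surjectivity of $D$ and the same bijection $e^{j_1\dots j_{q+1}}\mapsto e^{j_1\dots j_q(j_{q+1}-1)}$ (the paper packages this as an auxiliary operator $A$, but the content is identical), and linear independence rests on the same observation that the ``consecutive top-two'' monomial $e^{i_1\dots i_q\,i_q+1}$ occurs only in the $l=0$ term. Part~\eqref{it:kerDn} is handled identically.
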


Note that in the Introduction we used $D_1=(\ad_{e_1})^*$ rather than $D$ to define $F$. This yields the same object, since in \eqref{eq:classes}, $D$ only acts on elements of $\Lambda(\h)$ and $D$ is the restriction on $D_1$ to $\Lambda(\h)$. Notice however that Lemma \ref{l:kerD} concerns $\ker D$, which is different to $\ker D_1$.

\begin{proof}
\eqref{it:kerD} The fact that $F(\omega,e^i) \in \ker D$ follows immediately, as from \eqref{eq:DLh}, for any $\omega \in \Lambda(\h)$ and $e^i \in \h$ we have
\begin{align*}
    DF(\omega,e^i)&= D \Big(\sum\nolimits_{l = 0}^\infty D^l \omega \wedge e^{i+1+l}\Big)\\
 &= \sum\nolimits_{l = 0}^\infty D^{l+1} \omega \wedge e^{i+1+l}+ \sum\nolimits_{l = 0}^\infty D^l \omega \wedge e^{i+l}\\
 &= \sum\nolimits_{l = 1}^\infty D^{l} \omega \wedge e^{i+l}+ \sum\nolimits_{l = 0}^\infty D^l \omega \wedge e^{i+l}\\
 &=\omega \wedge e^i ,
\end{align*}
as we are working over $\Z_2$. Notice in passing that this also shows that $D$ is surjective.

The fact that the elements given by \eqref{eq:Dbasis} are linearly independent is also easy, as from among the monomials $e^{j_1 j_2 \dots j_q j_{q+1}}, \; 2 \le j_1 < j_2 < \dots < j_q < j_{q+1}$ which appear on the right-hand side of the expansion of $F(e^{i_1 i_2 \dots i_q},e^{i_q})$, there is exactly one with the property that $j_{q+1} = j_q+1$, namely the monomial $e^{i_1 i_2 \dots i_q i_{q}+1}$. The fact that they indeed span the kernel of the restriction of $D$ to $\Lambda^{q+1}_k(\h)$ follows from the same observation and from the dimension count. The elements $F(e^{i_1 i_2 \dots i_q},e^{i_q}) \in \Lambda^{q+1}_k(\h)$ with $q \ge 1, \; 2 \le i_1 < i_2 < \dots < i_q, \; i_q+1 +\sum\nolimits_{j=1}^q i_j=k,$ are in one-to-one correspondence with the elements $e^{j_1 j_2 \dots j_q j_{q}+1} \in \Lambda^{q+1}_k(\h)$ with $2 \le j_1 < j_2 < \dots < j_q$. On the other hand, consider the linear operator $A:\Lambda^{q+1}_k(\h) \to \Lambda^{q+1}_{k-1}(\h)$ defined on the monomials as follows: $Ae^{j_1 j_2 \dots j_q j_{q+1}}=e^{j_1 j_2 \dots j_q j_{q+1}-1}$. Then $A$ is surjective and its kernel is spanned by the monomials $e^{j_1 j_2 \dots j_q j_{q}+1}$, so every surjective linear operator from $\Lambda^{q+1}_k(\h)$ to $\Lambda^{q+1}_{k-1}(\h)$ (in particular, $D$) has a kernel of the same dimension.

\eqref{it:kerDn} easily follows from the fact that for the operator $D$ defined for $\g=\m_0$, the subspace $\Lambda(\h)$ defined for $\m_0(n)$ is $D$-invariant, and the restriction of $D$ to it is the operator $D$ defined for $\m_0(n)$.
\end{proof}

With Lemma~\ref{l:kerD} we can easily finish the computation of the cohomology for $\g=\m_0$; we obtain the same answer as in \cite[Theorem~3.4]{FM}:

\begin{theorem}\label{t:main_H_m_0}
The cohomology classes of the cocycles
\begin{equation} \label{eq:classesm0}
e^1, \; e^2, \; F(e^{i_1 i_2 \dots i_q},e^{i_q}),
\end{equation}
where $q \ge 1, \; 2\le i_1 {<}i_2{<}{\dots} {<}i_q$, form a basis for $H^*(\m_0)$ over the field $\Z_2$.
\end{theorem}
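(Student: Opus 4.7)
The plan is to read off both the cocycle and coboundary spaces directly from the decomposition
\[
\Lambda(\m_0)=\Lambda(\h)\oplus e^1\wedge\Lambda(\h),
\]
using \eqref{eq:defD}. For $\xi=\omega_0+e^1\wedge\omega_1$ with $\omega_0,\omega_1\in\Lambda(\h)$ we have $d\xi=e^1\wedge D\omega_0$, so I expect to conclude that
\[
\ker d=\ker D\oplus(e^1\wedge\Lambda(\h)),\qquad \Im d=e^1\wedge D(\Lambda(\h)).
\]
The first identity gives the cocycles, with the $\ker D$ piece supplied by Lemma~\ref{l:kerD}\eqref{it:kerD}; the second reduces the whole problem to understanding the image of $D$ on $\Lambda(\h)$.

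The key intermediate claim I would establish is that $D\colon\Lambda^q(\h)\to\Lambda^q(\h)$ is surjective for every $q\ge 1$. This is essentially already in the Lemma~\ref{l:kerD} argument: the identity $DF(\omega,e^i)=\omega\wedge e^i$ shows that any product $\omega\wedge e^i$ with $i\ge 2$ lies in $\Im D$. Every monomial $e^{j_1\cdots j_m}\in\Lambda^m(\h)$ with $m\ge 1$ can be written as $e^{j_1\cdots j_{m-1}}\wedge e^{j_m}$ with $j_m\ge 3$, except for the degenerate case $m=1$, $j_m=2$, which is handled by $De^3=e^2$. So $\Im D=\bigoplus_{q\ge 1}\Lambda^q(\h)$, and therefore $\Im d=e^1\wedge\bigoplus_{q\ge 1}\Lambda^q(\h)$.

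It then remains to take the quotient rank by rank. In rank one, $\ker d\cap\Lambda^1=(\ker D\cap\h^*)\oplus\Z_2\cdot e^1=\Z_2\cdot e^2\oplus\Z_2\cdot e^1$, and there are no coboundaries in rank one; this yields the classes $e^1$ and $e^2$. In rank $q\ge 2$, the coboundary space is exactly the $e^1\wedge\Lambda^{q-1}(\h)$ summand of $\ker d\cap\Lambda^q$, so a natural set of representatives for the quotient is $\ker D\cap\Lambda^q(\h)$. By Lemma~\ref{l:kerD}\eqref{it:kerD} this subspace has basis $\{F(e^{i_1\cdots i_{q-1}},e^{i_{q-1}}):2\le i_1<\cdots<i_{q-1}\}$, which together with $e^1,e^2$ gives exactly the list in \eqref{eq:classesm0}.

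The only point that requires real care is the surjectivity of $D$; everything else is bookkeeping on the decomposition $\Lambda(\m_0)=\Lambda(\h)\oplus e^1\wedge\Lambda(\h)$. Since the Lemma already supplies both a basis of $\ker D$ and, via the computation $DF(\omega,e^i)=\omega\wedge e^i$, a preimage for an arbitrary product, I do not anticipate a serious obstacle: the theorem should follow by direct quotient counting once the two kernels and one image are identified.
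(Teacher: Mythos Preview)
Your proposal is correct and follows essentially the same route as the paper: decompose $\Lambda(\m_0)=\Lambda(\h)\oplus e^1\wedge\Lambda(\h)$, read off $\ker d=\ker D\oplus e^1\wedge\Lambda(\h)$, invoke the surjectivity of $D$ (obtained from $DF(\omega,e^i)=\omega\wedge e^i$ in the proof of Lemma~\ref{l:kerD}\eqref{it:kerD}) to get $\Im d=e^1\wedge\Lambda(\h)$ in each positive rank, and then quote the basis of $\ker D$ from Lemma~\ref{l:kerD}\eqref{it:kerD}. Your write-up is simply a more explicit version of the paper's two-sentence proof.
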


Furthermore, the dimensions of the homogeneous components of $H^*(\m_0)$ over $\Z_2$ are the same as those over a field of characteristic zero, so in particular,
\begin{equation*}
\dim H_{k{+}\frac{q(q{+}1)}{2}}^q(\m_0)= P_q(k)-P_q(k-1),
\end{equation*}
where $P_q(k)$ is the number of partitions of a positive integer $k$ into $q$ parts. The products of the basis elements also have ``the same" decomposition as in \cite[Equation~(8)]{FM}, after reducing the coefficients modulo $2$.

\begin{proof}[Proof of Theorem~\ref{t:main_H_m_0}]
From Lemma~\ref{l:kerD}\eqref{it:kerD} we know $\ker D$, and so we know $\ker d = e^1 \wedge \Lambda(\h) \oplus \ker D$. The image of $d$ is just $e^1 \wedge \Lambda(\h)$, by \eqref{eq:defD} and from the surjectivity of $D$ (which has been established in the proof of Lemma~\ref{l:kerD}\eqref{it:kerD}). Putting these two facts together we get the claim.
\end{proof}


We now turn our attention to the case $\g=\m_0(n)$. We view $\m_0(n)$ as a subspace of $\m_0$ spanned by the first $n$ basis elements and for convenience, denote the operator $D$ defined for $\m_0$ by $\D$. The following Proposition easily follows from Lemma~\ref{l:kerD}.

\begin{proposition}\label{p:H1H2m0n}
The space $H^1(\m_0(n))$ is spanned by the classes of the elements $e^1, e^2$ and so $b_1(\m_0(n))=2$. The space $H^2(\m_0(n))$ is spanned by the classes of the elements $e^{1n}, \; F(e^i,e^i)=e^{i,i+1}+e^{i-1,i+3} + \dots + e^{2, 2i-1}, \; 2 \le i \le \tfrac12 (n+1)$, and so $b_2(\m_0(n))=\lfloor\frac12(n+1)\rfloor$.
\end{proposition}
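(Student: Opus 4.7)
The plan is to apply Lemma~\ref{l:kerD} together with the decomposition $\Lambda^q(\g) = (e^1 \wedge \Lambda^{q-1}(\h)) \oplus \Lambda^q(\h)$ and the identity $d\omega = e^1 \wedge D\omega$ from \eqref{eq:defD}, carrying out the computation separately in ranks one and two.

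For $H^1$, formula \eqref{eq:dm0} immediately yields $de^1 = de^2 = 0$ and $de^i = e^{1,\,i-1}$ for $3 \le i \le n$, so $\ker(d\colon \Lambda^1 \to \Lambda^2) = \Span(e^1, e^2)$; since $d$ vanishes on $\Lambda^0$, this gives $b_1(\m_0(n)) = 2$ with the claimed basis.

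For $H^2$, I first decompose
\[
\ker(d\colon \Lambda^2 \to \Lambda^3) = \bigl(e^1 \wedge \Lambda^1(\h)\bigr) \oplus \ker\!\bigl(D|_{\Lambda^2(\h)}\bigr),
\]
using that every element of $e^1 \wedge \Lambda(\h)$ is already closed and that, on $\Lambda^2(\h)$, $d$ coincides with $e^1 \wedge D(\cdot)$. The first summand has dimension $n-1$. For the second, Lemma~\ref{l:kerD}\eqref{it:kerDn} identifies $\ker(D|_{\Lambda^2(\h)})$ with the intersection of $\Lambda^2(\h)$ (for $\h=\Span(e_2,\dots,e_n)$) with the span, inside the corresponding space for $\m_0$, of the cocycles $F(e^i, e^i)$, $i \ge 2$. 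The crucial observation is that $F(e^i, e^i) = \sum_{l=0}^{i-2} e^{i-l} \wedge e^{i+1+l}$ is homogeneous of degree $2i+1$ in the $\mathbb{N}$-grading, so different values of $i$ produce elements in pairwise disjoint homogeneous components. Consequently, the intersection is spanned precisely by those $F(e^i, e^i)$ whose largest index $2i-1$ does not exceed $n$, that is, $2 \le i \le \lfloor\tfrac12(n+1)\rfloor$, contributing $\lfloor\tfrac12(n+1)\rfloor - 1$ basis elements.

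Finally, $\Im(d\colon \Lambda^1 \to \Lambda^2) = \Span\bigl(e^{1,j} : 2 \le j \le n-1\bigr)$ is a codimension-one subspace of $e^1 \wedge \Lambda^1(\h)$ and meets $\ker(D|_{\Lambda^2(\h)})$ trivially, since the two summands above sit in complementary parts of $\Lambda^2(\g)$. It follows that $[e^{1n}]$ together with $[F(e^i,e^i)]$ for $2 \le i \le \lfloor\tfrac12(n+1)\rfloor$ is a basis of $H^2(\m_0(n))$ and $b_2 = \lfloor\tfrac12(n+1)\rfloor$. The single step requiring real care is the application of Lemma~\ref{l:kerD}\eqref{it:kerDn}: a priori one might worry that some linear combination of $F(e^i, e^i)$'s with $i > \lfloor\tfrac12(n+1)\rfloor$ could land in $\Lambda^2(\h)$ by cancellation of out-of-range monomials, but the degree-homogeneity observed above rules this out immediately.
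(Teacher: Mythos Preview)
Your argument is correct and follows essentially the same route as the paper: decompose $\ker d = (e^1\wedge\Lambda^1(\h))\oplus\ker(D|_{\Lambda^2(\h)})$, invoke Lemma~\ref{l:kerD}\eqref{it:kerDn} to identify $\ker D$ with the span of those $F(e^i,e^i)$ lying in $\Lambda^2(\h)$, and quotient by $\Im d=\Span(e^{1,j}:2\le j\le n-1)$. Your justification of the non-cancellation step via the degree grading (each $F(e^i,e^i)$ homogeneous of degree $2i+1$) is exactly the paper's observation that ``no two monomials of the different $F(e^i,e^i)$ may possibly cancel,'' made slightly more explicit.
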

\begin{proof}
The claim for $H^1(\m_0(n))$ is clear. For the second cohomology, by Lemma~\ref{l:kerD}\eqref{it:kerD}, the kernel of $\D$ is spanned by the elements $F(e^i,e^i)=e^{i,i+1}+e^{i-1,i+3} + \dots + e^{2, 2i-1}$. Since a sum of some number of the $F(e^i,e^i)$ belongs to $\m_0(n)$ if and only if each of them does (no two monomials of the different $F(e^i,e^i)$ may possibly cancel), we get by Lemma~\ref{l:kerD}\eqref{it:kerDn}:
\begin{equation}\label{eq:kerD2}
\ker D = \Span (F(e^i,e^i) \, : \, 2 \le i \le \tfrac12 (n+1)).
\end{equation}
Then $\ker d = e^1 \wedge \Lambda^1(\h) \oplus \ker D$ and so the second coboundary space is spanned by $e^{1i}, F(e^i,e^i),\; i=2, \dots, n-1$. Then, as the image of $d$ on the space of one-forms is spanned by $e^1 \wedge e^i$, for $1\le i\le n-1$,  the claim follows. \end{proof}

Proposition \ref{p:H1H2m0n} establishes  parts \eqref{it:b1} and \eqref{it:b2} of Theorem~\ref{t:b3m0z2}. The first two Betti numbers of $\m_0(n)$ over $\Z_2$ are the same as those over a field of characteristic zero \cite{AS}, but $b_3$ is different, as Theorem~\ref{t:b3m0z2}\eqref{it:b3} shows.

\begin{remark}\label{rem:tabtooth}
Explicitly, for small values of $n$, Theorem~\ref{t:b3m0z2}\eqref{it:b3} gives:
\begin{table}[h!]
\begin{tabular}{|c|c|c|c|c|c|c|c|c|c|c|c|c|c|c|c|c|c|c|}
  \hline
  $n$ & 3 & 4 & 5 & 6 & 7 & 8 & 9 & 10 & 11 & 12 & 13 & 14 & 15 & 16 & 17 & 18 & 19 & 20 \\
  \hline
  $b_3(\m_0(n))$ & 1 & 2 & 3 & 4 & 7 & 10 & 11 & 12 & 15 & 18 & 23 & 28 & 35 & 42 & 43 & 44 & 47 & 50 \\
  \hline
\end{tabular}
\end{table}

The sequence $b_3(\m_0(n))$ is the sequence A266540 in \cite{OEIS}\footnote{The authors are thankful to Omar E. Pol for pointing this out.}. To see that, we note that by the formula given in Theorem~\ref{t:b3m0z2}\eqref{it:b3}, $b_3(\m_0(n))=\frac12(b_3(\m_0(n-1))+b_3(\m_0(n+1)))$, for odd $n \ge 3$, and so it suffices to show that the even terms of the two sequences coincide, which is equivalent to the fact that the sequence $A_l:=\frac12 b_3(\m_0(2l))=\frac13(2^{2p-2}-1)+ s^2$, where $l = 2^{p-1}+s, \, 0 < s \le 2^{p-1}$, coincides with A256249. This is equivalent to the fact that $A_l$ is the $(l-1)$-st partial sum of the sequence A006257 given by $a_j=2(j-2^{\lfloor \log_2{j}\rfloor})+1$. But the latter partial sum equals $l^2-1-2(2^{p-1} s + \sum_{i=0}^{p-2} 2^{2i})$, and the claim follows.
\end{remark}

The proof of Theorem~\ref{t:b3m0z2}\eqref{it:b3} is based on the following Proposition. For brevity, let us denote the vector space $\Lambda^3(e_2, \dots, e_{n-1})$ by $W$. Denote $\h=\Span(e_2, \dots, e_{n})$.
\begin{proposition}\label{p:kerdm0n} 
For $m$ as defined in Theorem~\ref{t:b3m0z2}, there exists $\omega_k \in W$ for $2 \le k \le m$ such that
\begin{equation*}
\ker D_{|\Lambda^3(\h)} = \ker D_{|W} \oplus \Span(e^n \wedge F(e^k,e^k) + \omega_k \, : \, 2 \le k \le m).
\end{equation*}
\end{proposition}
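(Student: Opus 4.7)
The plan is to use the direct-sum decomposition $\Lambda^3(\h) = W \oplus e^n \wedge \Lambda^2(e_2, \dots, e_{n-1})$. Writing a general three-form as $\xi + e^n \wedge \eta$ with $\xi \in W$ and $\eta \in \Lambda^2(e_2, \dots, e_{n-1})$, the derivation property and $De^n = e^{n-1}$ yield
\begin{equation*}
D(\xi + e^n \wedge \eta) = \bigl(D\xi + e^{n-1} \wedge \eta\bigr) + e^n \wedge D\eta,
\end{equation*}
with the two summands lying in the complementary subspaces $W$ and $e^n \wedge \Lambda^2(e_2, \dots, e_{n-1})$, respectively. Hence $\xi + e^n \wedge \eta \in \ker D$ if and only if $D\eta = 0$ and $D\xi = e^{n-1} \wedge \eta$. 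The subcase $\eta = 0$ recovers $\ker D_{|W}$, while by Lemma~\ref{l:kerD} any nonzero candidate $\eta$ is a linear combination of the two-forms $F(e^k, e^k)$ with $2 \le k \le \lfloor n/2 \rfloor$. The proposition is thus reduced to determining, for each such $k$, whether $e^{n-1} \wedge F(e^k, e^k)$ lies in $\Im D_{|W}$, and to supplying a preimage $\omega_k \in W$ when it does.

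For the existence direction $2 \le k \le m$, I would construct $\omega_k$ explicitly. Small cases (for instance $\omega_k = e^{n-1} \wedge e^k \wedge e^{k+2}$ at the boundary $k = m$, and longer sums such as $\omega_2 = e^{247} + e^{256}$ when $n = 8$) suggest a telescoping formula: a leading term $e^{n-1} \wedge e^k \wedge e^{k+2}$, followed by corrective summands of the shape $e^{n-1-j} \wedge (\text{shifted two-form})$, each chosen so that the unwanted part of $D$ applied to the previous summand cancels via $1+1=0$ in $\zz$, leaving only $e^{n-1} \wedge F(e^k, e^k)$ in $D\omega_k$. The tail terminates while all indices remain within $\{2, \dots, n-1\}$ precisely when $k + 2^p \le n$, i.e., when $k \le m$; this is the natural point at which the arithmetic decomposition $n = 2^p + m$ enters.

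For the non-existence direction $m < k \le \lfloor n/2 \rfloor$ I would exhibit a linear functional on $W$ that annihilates $\Im D_{|W}$ but is nonzero on $e^{n-1} \wedge F(e^k, e^k)$, ruling out any $\omega_k$. The natural obstruction exploits the characteristic-two identity $D^{2^p}(e^i \wedge e^j) = e^{i-2^p} \wedge e^j + e^i \wedge e^{j-2^p}$, a consequence of Lucas' theorem since $\binom{2^p}{a} \equiv 0 \pmod 2$ for $0 < a < 2^p$; extending this to three-forms and composing with projection onto a suitably chosen low-degree monomial yields a functional whose vanishing on $\Im D_{|W}$ is automatic and whose value on $e^{n-1} \wedge F(e^k, e^k)$ is controlled precisely by the inequality $k > m$. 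This is the principal technical obstacle of the proof. Once both directions are established, the direct-sum assertion follows at once: the $e^n$-parts $e^n \wedge F(e^k, e^k)$ of the claimed generators, $2 \le k \le m$, have distinct leading monomials $e^{k, k+1, n}$, so they are linearly independent modulo $\ker D_{|W}$, and their nonzero $e^n$-components preclude any membership in $\ker D_{|W}$ itself.
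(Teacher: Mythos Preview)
Your initial reduction is exactly the paper's: decompose $\Lambda^3(\h)=W\oplus e^n\wedge\Lambda^2(e_2,\dots,e_{n-1})$, observe that $\eta$ must lie in $\ker D$ on two-forms, hence is a combination of the $F(e^k,e^k)$, and thereby reduce the proposition to the claim that $e^{n-1}\wedge F(e^k,e^k)\in\Im D_{|W}$ if and only if $k\le m$. So far so good.

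From that point on, however, your sketch and the paper diverge, and your sketch has genuine gaps. First, your boundary example is wrong: for $n=10$ (so $p=3$, $m=2$) and $k=m=2$ one has $D(e^{249})=e^{239}+e^{248}\neq e^{239}=e^{n-1}\wedge F(e^2,e^2)$; the actual preimage is $e^{249}+e^{258}+e^{267}$, with three terms, not one. The single-term formula $e^{n-1}\wedge e^k\wedge e^{k+2}$ only works by accident when $2k-1\in\{n-1,n-2\}$. More importantly, you give no general recipe for the ``telescoping'' sum nor any argument that it closes up precisely when $k\le m$, and your proposed obstruction via $D^{2^p}$ for $k>m$ is never actually constructed: you assert that some functional built from it will separate $e^{n-1}\wedge F(e^k,e^k)$ from $\Im D_{|W}$, but you don't say which functional or why the condition $k>m$ is exactly what makes it nonzero.

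The paper's route is quite different and worth knowing. Rather than work inside $W$, it observes that any element of $\ker D_{|\Lambda^3(\h)}$ is the restriction of an element of $\ker\D$ on $\m_0$, hence a $\zz$-linear combination $\sum_r\mu_r F(e^{n+2k-2r,r},e^r)$ for $a\le r\le b$ with $a=\lceil(n+2k+1)/3\rceil$, $b=\lfloor n/2\rfloor+k-1$. Matching the $e^n$-coefficient to $F(e^k,e^k)$ and demanding that no $e^N$ with $N>n$ appear becomes, after a Vandermonde manipulation, the linear system $Ax=(1,0,\dots,0)^t$ over $\zz$ with $A_{ij}=\binom{n-(a+j-1)+2(i-1)}{(a+j-1)+(i-1)-k}$. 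Non-solvability for $k>m$ is proved by exhibiting an explicit linear dependence among the rows of $A$ (with coefficients $\binom{k-m-1}{i-1}$), using a Kummer/Lucas identity $\binom{2^p-1-x}{y}\equiv\binom{x+y}{y}$. Solvability for $k\le m$ is proved by writing down an explicit solution $x_j=\sum_{s=0}^{p-1}\binom{m-k}{n-(a+j-1)-2^s}$ and verifying it via a generating-function computation with $(t^2+t)^x(t^2+t+1)^y$. Both directions are thus handled by explicit binomial-coefficient arithmetic rather than by the telescoping/functional heuristics you outline.
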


We first prove the theorem assuming the Proposition.

\begin{proof}[Proof of Theorem~\ref{t:b3m0z2}\eqref{it:b3}]
For $n = 3$ the statement is easily verified: $H^3(\m_0(3))$ is spanned by the class of the single element $e^{123}$, so $b_1(\m_0(3))=1$, as claimed.

Assume $n \ge 4$. Denote $d_n$ the dimension of the kernel of the operator $D$ constructed for the algebra $\m_0(n)$. Then from Proposition~\ref{p:kerdm0n} we have $d_n=d_{n-1}+m-1$. It follows that for $n=2^p+m, \; 0<m \le 2^p$, we have $d_n=d_{2^p}+\frac12 m(m-1)$ and in particular,
\begin{equation}\label{E:rec}
d_{2^{p+1}}=d_{2^p}+ 2^{p-1}(2^p-1).
\end{equation}
We also have $d_4=1$, as for $\m_0(4)$ the space $\ker D$ is spanned by $e^{234}$. It follows from \eqref{E:rec} that $d_{2^p}=\tfrac13 (2^p-1)(2^{p-1}-1)$, and so $d_n=\tfrac13 (2^p-1)(2^{p-1}-1) + \tfrac12 m(m-1)$.

We have
\[
\dim \ker(d:\Lambda^3(\m_0(n)) \to \Lambda^{4}(\m_0(n))) = d_n+ \dim(e^1 \wedge \Lambda^{2}(\m_0(n))=d_n+\frac12 (n-1)(n-2).
\]
On the other hand, from Proposition~\ref{p:H1H2m0n},
\[
\dim \ker(d:\Lambda^{2}(\m_0(n) \to \Lambda^3(\m_0(n)) = (n-2)+\lfloor\tfrac12 (n+1)\rfloor,
\]
 and so the claim follows from \eqref{eq:dimker}.
\end{proof}

\begin{proof}[Proof of Proposition~\ref{p:kerdm0n}]
Any $\omega \in \Lambda^3(\h)$ can be uniquely represented as $\omega=e^n \wedge \xi + \omega'$, with $\xi \in \Lambda^2(e_2, \dots, e_{n-1}), \; \omega' \in \Lambda^3(e_2, \dots, e_{n-1})=W$. For $\omega$ to belong to $\ker D$ it is necessary that $D\xi = 0$ (so that $D\omega$ does not contain $e^n$). From the proof of Proposition~\ref{p:H1H2m0n} it follows that $\xi$ must be a linear combination of $F(e^k, e^k), \; k=2, \dots, \lfloor n/2\rfloor$. Extracting the homogeneous components we obtain that the proposition is equivalent to the following statement: for $2 \le k \le \lfloor n/2\rfloor$, there exists $\omega_k \in W$ such that $e^n \wedge F(e^k, e^k) + \omega_k \in \ker D$, if and only if $k \le m$.

The next step in the proof is the following lemma.

\begin{lemma}\label{l:matbino}
For $n \ge 4$ and $2 \le k \le \lfloor n/2\rfloor$, define $a=\lceil(n+2k+1)/3\rceil, \; b=\lfloor n/2\rfloor+k-1$.
There exists $\omega_k \in W$ such that $e^n \wedge F(e^k, e^k) + \omega_k  \in \ker D$ if and only if the linear system $Ax=(1,0,\dots,0)^t \in \Z_2^{k-1}$ has a solution $x \in \Z_2^{b-a+1}$, where $A$ is the $(k-1) \times (b-a+1)$-matrix given by
\begin{equation}\label{eq:A}
A_{ij}= \binom{n-(a+j-1)+2(i-1)}{(a+j-1)+(i-1)-k} \mod 2, \quad 1 \le i \le k-1, \; 1 \le j \le b-a+1,
\end{equation}
and as usual we set $\binom{N}{t}=0$ if $t < 0$ or $t> N$.
\end{lemma}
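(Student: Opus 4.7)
The plan is to first translate the cocycle condition into an equation for $\omega_k$ alone. Since $D$ is a derivation with $De^n=e^{n-1}$, $DF(e^k,e^k)=0$, and $D$ preserves $W$, the condition $e^n\wedge F(e^k,e^k)+\omega_k\in\ker D$ is equivalent to the inhomogeneous equation
\[
D\omega_k \;=\; e^{n-1}\wedge F(e^k,e^k) \quad \text{in }\; W_{n+2k}, \qquad \omega_k \in W_{n+2k+1}.
\]
Next I would decompose $\omega_k$ by the top index appearing in each of its monomials, writing $\omega_k=\sum_{l}e^l\wedge\eta_l$ with $\eta_l\in\Lambda^2(e_2,\dots,e_{l-1})$. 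The derivation formula then splits the equation into a descending chain of coupled anti-derivative problems $D\eta_{n-1}=F(e^k,e^k)$ and $D\eta_l=\eta_{l+1}$ for $l<n-1$, terminating at some bottom level with $\eta_{l_0-1}=0$.

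At each level of the chain the anti-derivative $\eta_l$ is determined by $\eta_{l+1}$ only up to an element of $\ker D|_{\Lambda^2(e_2,\dots,e_{l-1})}$, which by the argument of Proposition~\ref{p:H1H2m0n} is spanned by the cocycles $F(e^j,e^j)$ for $2\le j\le\lfloor l/2\rfloor$. An explicit starting anti-derivative of $F(e^k,e^k)$ is $\sum_{r=0}^{k-2}F(e^{k-r},e^{k+r+1})$, as one verifies from the identity $DF(\omega,e^c)=\omega\wedge e^c$ over $\Z_2$. I would parametrize the cumulative free choices in the chain by scalars $x_c\in\Z_2$ indexed by a parameter $c$ (the expected range $a\le c\le b$ to be justified by the monomial-degree bookkeeping: $b$ as the largest middle index compatible with $l\le n-1$ and $i+j+l=n+2k+1$, and $a$ from the lowest index compatible with the terminal constraint) and enumerate the $k-1$ obstructions to terminating the chain, these being the coefficients of specific basis elements in the final residue that must vanish.

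The main step is then to identify the resulting matrix of obstructions as $A$ with entries $\binom{n-c+2r}{c+r-k}\bmod 2$ and target $(1,0,\dots,0)^t$. The binomial coefficients should emerge from the Leibniz-rule structure of $D$: each chain step $D\eta_l=\eta_{l+1}$ is a bidiagonal map on the monomial basis, reflecting $De^{ij}=e^{i-1,j}+e^{i,j-1}$, and iterating it a fixed number of times on a kernel generator yields a binomial by Pascal's recurrence modulo $2$. The specific shifts $n-c+2r$ and $c+r-k$ should track how many times the two indices of each monomial are lowered between the level where a kernel generator parametrized by $c$ is introduced and the terminal level, combined with the shift from the $r$-th summand $e^{k-r}\wedge e^{k+1+r}$ of $F(e^k,e^k)$. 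The normalization of the target to $(1,0,\dots,0)^t$, rather than the all-ones pattern naively suggested by $F(e^k,e^k)$, should come from a telescoping along the chain in which only the leading obstruction survives the iterated bidiagonal action.

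The main obstacle I anticipate is this last bookkeeping. Matching an iteration of bidiagonal operators to a single binomial requires Lucas's theorem and Pascal-style identities over $\Z_2$, and carefully accounting for the truncations $\eta_l\in\Lambda^2(e_2,\dots,e_{l-1})$ at every level will likely force a case split depending on the parity of $n$ and on whether $2k$ is close to $n$. I would begin with the small cases $k=2$ and $k=3$, where the matrices can be verified directly by an exhaustive calculation, to calibrate the identifications of $a$, $b$, the parameter $c$, and the correspondence between basis vectors and binomial coefficients; then generalize by an inductive argument on $k$ that propagates the combinatorial identity through successive chain steps.
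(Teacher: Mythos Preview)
Your top-index chain as written does not match the derivation rule. If $\omega_k=\sum_l e^l\wedge\eta_l$ with $\eta_l\in\Lambda^2(e_2,\dots,e_{l-1})$, then grouping $D\omega_k$ by top index gives, at level $m<n-1$, the equation $D\eta_m=\beta_{m+1}$, where $\beta_{m+1}$ is the $\Lambda^2(e_2,\dots,e_{m-1})$-component of $\eta_{m+1}$ --- not $\eta_{m+1}$ itself (the $e^m$-part of $\eta_{m+1}$ is killed in $e^m\wedge\eta_{m+1}$). Since $D\eta_m$ always lands in $\Lambda^2(e_2,\dots,e_{m-1})$ while $\eta_{m+1}$ in general contains $e^m$-terms, the system $D\eta_l=\eta_{l+1}$ is inconsistent as stated. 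With the corrected recursion the $e^m$-coefficients $\alpha_{m+1}$ of $\eta_{m+1}$ become new data that feed back into subsequent steps, and the bookkeeping you anticipate is considerably worse than you suggest; in particular it is not clear your parametrisation of free choices by a single index $c$ with $a\le c\le b$ survives.

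The paper sidesteps the recursion entirely by using a global fact you already have: Lemma~\ref{l:kerD}\eqref{it:kerD} gives an explicit basis of $\ker\D$ on the \emph{infinite} algebra $\m_0$. Any homogeneous degree-$(n+2k+1)$ three-form in $\ker D$ is therefore a combination $\omega=\sum_{r}\mu_r\,F(e^{n+2k-2r,\,r},e^r)$, and the constraint $2\le n+2k-2r<r$ is precisely what forces $a\le r\le b$; this is where $a,b$ and the parameter space $\Z_2^{\,b-a+1}$ come from, with no chain argument needed. The remaining task is to impose that $\omega$ actually lies in $\Lambda^3(\m_0(n))$ with $e^n$-coefficient $F(e^k,e^k)$, and the observation that $\xi_{N+1}=D\xi_N$ reduces this to the single equation $\sum_r\mu_r\,D^{\,n-r-1}(e^{n+2k-2r,r})=F(e^k,e^k)$. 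Expanding $D^{n-r-1}$ binomially gives a $(k-1)\times(b-a+1)$ system with all-ones right-hand side; one pass of Vandermonde's identity (weight the first $s$ equations by $\binom{2s-1}{s-i}$ and sum) converts it to $Ax=(1,0,\dots,0)^t$ with the stated matrix. No truncation cases, no Lucas-type arguments, and no induction on $k$ are required. The key idea you are missing is to work \emph{inside} the already-known kernel of the infinite problem rather than to reconstruct it level by level.
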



\begin{proof}
Suppose for some $\omega_k \in W$, the three-form $\omega=e^n \wedge F(e^k, e^k) + \omega_k $ belongs to $\ker D$ (where $2 \le k \le \lfloor n/2\rfloor$). Without loss of generality we can assume that $\omega_k$ is homogeneous, of the same degree as $e^n \wedge F(e^k, e^k)$, so that $\omega$ is homogeneous of degree $n+2k+1$.

By Lemma~\ref{l:kerD}, the form $\omega$ viewed as a three-form on $\m_0$, lies in the kernel of $\D$ and so is a linear combination of the forms $F(e^{s,r},e^r), \; 2 \le s < r$, where by homogeneity we can assume that $s+2r+1=n+2k+1$, from which it follows that $s=n+2k-2r$. Then $2 \le s \le r-1$ gives  $a \le r \le b$. Therefore for some $\mu_r \in \Z_2, \; r=a, \dots, b$ we have
\begin{align}
    \omega&= F(e^k, e^k) \wedge e^n + \omega_k =\sum\nolimits_{r=a}^b \mu_r F(e^{n+2k-2r,r},e^r) \notag\\
    &= \sum\nolimits_{r=a}^b \mu_r \sum\nolimits_{l=0}^\infty D^l(e^{n+2k-2r,r}) \wedge e^{l+r+1\notag }\\
    &= \sum\nolimits_{l=0}^\infty \sum\nolimits_{r=a}^b \mu_r D^l(e^{n+2k-2r,r}) \wedge e^{l+r+1}.\label{RHS}
\end{align}
As $n+2k-2r=s<r \le b$ and $b=\lfloor n/2\rfloor+k-1 \le 2\lfloor n/2\rfloor-1 < n$, no terms $D^l(e^{n+2k-2r,r})$ in the latter expression may possibly contain $e^N, \, N \ge n$. It follows that the only terms containing $e^N$ with $N \ge n$ in \eqref{RHS} are $\xi_N \wedge e^{N}$, where $\xi_N:=\sum\nolimits_{r=a}^{\min\{b,N-1\}} \mu_r D^{N-r-1} (e^{n+2k-2r,r})$. In fact, since $\omega\in \Lambda^3(\m_0(n))$, we have $\xi_N=0$ for all $N>n$ and equating the terms containing $e^n$ we get $\xi_n=F(e^k, e^k)$. Conversely, if $\xi_n=F(e^k, e^k)$, then $\xi_N=0$ for all $N>n$, as $\xi_{n+1}=D\xi_n=DF(e^k, e^k)=0, \; \xi_{n+2}=D^2\xi_n=D^2F(e^k, e^k)=0$, and so on. Thus a necessary and sufficient condition for the existence of $\omega_k \in W$ such that the three-form $\omega=e^n \wedge F(e^k, e^k) + \omega_k$ belongs to $\ker D$ is the existence of $\mu_r \in \Z_2, \; r=a, \dots, b$ such that
\begin{equation}\label{EF}
F(e^k, e^k)=\xi_n = \sum\nolimits_{r=a}^b \mu_r D^{n-r-1} (e^{n+2k-2r,r}).
\end{equation}
(the summation on the right-hand side is up to $b$ as $b \le n-1$). Note that both sides are homogeneous two-forms of degree $2k+1$. Recall that $F(e^k, e^k)= e^{k,k+1}+ e^{k-1,k+2} + \dots + e^{2,2k-1}$, and observe that
\[
D^{n-r-1} (e^{n+2k-2r,r})=\sum\nolimits_{i =0}^{n-r-1} \tbinom{n-r-1}{i} e^{2k-r+i+1,r-i}.
\]
So expanding and equating coefficients of the corresponding monomials we see that \eqref{EF} is equivalent to the following system:
\begin{align*}
    \sum\nolimits_{r=a}^b \mu_r \big(\tbinom{n-r-1}{r-k}+\tbinom{n-r-1}{r-(k+1)}\big) &=1 \mod 2, \\
    \sum\nolimits_{r=a}^b \mu_r \big(\tbinom{n-r-1}{r-(k-1)}+\tbinom{n-r-1}{r-(k+2)}\big)&=1 \mod 2, \\
    \vdots & \\
    \sum\nolimits_{r=a}^b \mu_r \big(\tbinom{n-r-1}{r-2}+\tbinom{n-r-1}{r-(2k-1)}\big)&=1 \mod 2.
    \end{align*}
Now the linear combination of the first $s \le k-1$ of the above equations with the coefficients $\binom{2s-1}{s-1}, \binom{2s-1}{s-2}, \dots, \binom{2s-1}{1}, \binom{2s-1}{0}$ respectively gives
\[
\sum\nolimits_{r=a}^b \mu_r \bigg(\sum\nolimits_{i=0}^{2s-1}\binom{2s-1}{i} \binom{n-r-1}{r-k-s+i}\bigg)= \sum\nolimits_{r=a}^b \mu_r \binom{n-r+2s-2}{r-k+s-1}
\]
 on the left-hand side (as $\sum_{i=0}^l\binom{l}{i}\binom{N}{t+i}=\sum_{i=0}^l\binom{l}{l-i}\binom{N}{t+i}=\binom{N+l}{t+l}$ by Vandermonde's identity). On the right-hand side we obtain $\binom{2s-1}{s-1}+ \binom{2s-1}{s-2} + \dots + \binom{2s-1}{1} + \binom{2s-1}{0} = \frac12 \times 2^{2s-1}=2^{2s-2}$, which is odd when $s=1$ and even otherwise. Thus the above system of equations is equivalent to the following one:
\begin{equation*}
    \sum\nolimits_{r=a}^b \mu_r \tbinom{n-r}{r-k}=1 \mod 2, \quad \sum\nolimits_{r=a}^b \mu_r \tbinom{n-r+2s-2}{r-k+s-1}=0 \mod 2, \text{ for } 2 \le s \le k-1.
\end{equation*}
This is equivalent to the claim of the lemma if we define $x=(\mu_a, \mu_{a+1}, \dots, \mu_b)^t$.
\end{proof}

In order to use Lemma~\ref{l:matbino} to conclude the proof of the proposition,  we need to show that the system $Ax=(1,0,\dots,0)^t$ has a solution if and only if $k \le m$. Even though we are working over $\zz$, let us say that vectors $x,y$ are \emph{orthogonal} if $x^ty=0$.



To prove the \textbf{necessity} we show that, assuming $k > m$, the first row of $A$ belongs to the span of the next $m-1$ rows, namely that
\begin{equation}\label{eq:lindeprows}
\big(\tbinom{k-m-1}{0}, \tbinom{k-m-1}{1}, \dots, \tbinom{k-m-1}{k-m-1}, 0, \dots, 0 \big) A = 0 \mod 2.
\end{equation}
Then any $x$ orthogonal to all the rows of $A$ starting from the second one, must also be orthogonal to the first row, and so the system $Ax=(1,0,\dots,0)^t$ has no solutions. To establish \eqref{eq:lindeprows} we need to show that for every $j=1, \dots, b-a+1$, we have
\begin{equation*}
    \sum\nolimits_{i=1}^{k-m} \binom{k-m-1}{i-1}\binom{n-(a+j-1)+2(i-1)}{(a+j-1)+(i-1)-k}=0 \mod 2.
\end{equation*}
which is equivalent (by substitution $r=a+j-1, \; l=i-1, \; N=k-m-1, \; n=2^p+m$) to showing that for all $r=a, \dots, b$,
\begin{equation}\label{eq:bibi}
    \sum\nolimits_{l=0}^{N} \binom{N}{l}\binom{2^p-1-(r-k+N-2l)}{r-k+l}=0 \mod 2.
\end{equation}
We require the following Lemma.

\begin{lemma}\label{l:binocong}
Suppose $p \ge 2$ and let $x, y \in \Z$.
\begin{enumerate}[{\rm (a)}]
  \item \label{it:binocong2}
  If $0 \le x < y < 2^p$, then $\binom{2^p+x}{y} = 0 \mod 2$. 

  \item \label{it:binocong1}
If $x, y \le 2^p-2$ and $y, x+y > 0$, then $\binom{2^p-1-x}{y} = \binom{y+x}{y} \mod 2$.

\end{enumerate}

\end{lemma}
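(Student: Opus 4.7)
The plan is to reduce both parts to bitwise statements about the binary expansions of $x$ and $y$, via Lucas's theorem modulo $2$ and its companion, Kummer's theorem on carries. Recall that Lucas gives $\binom{M}{N}\equiv\prod_i\binom{M_i}{N_i}\pmod 2$, where $M_i,N_i$ are the base-$2$ digits, and Kummer says that $\binom{M+N}{N}$ is odd precisely when adding $M$ and $N$ in base $2$ produces no carries, equivalently when the binary supports of $M$ and $N$ are disjoint.

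For part~\eqref{it:binocong2}, since $0\le x<2^p$, the binary expansion of $2^p+x$ consists of a $1$ in position $p$ together with the bits of $x$ in the lower positions, while $y<2^p$ has a $0$ in position $p$. The strict inequality $x<y$ then forces some highest index $j<p$ at which the digits of $x$ and $y$ disagree, and at this $j$ necessarily $x_j=0$ and $y_j=1$. Lucas's theorem therefore contributes a factor $\binom{0}{1}=0$, so $\binom{2^p+x}{y}\equiv 0\pmod 2$.

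For part~\eqref{it:binocong1}, the key observation is that $2^p-1-x$ is the bitwise complement of $x$ in its lowest $p$ bits (this uses $x\le 2^p-2<2^p$), and that $y<2^p$ has no bits in position $p$ or above. Hence Lucas gives
\[
\binom{2^p-1-x}{y}\equiv\prod_{i=0}^{p-1}\binom{1-x_i}{y_i}\pmod 2,
\]
which equals $1$ iff $y_i\le 1-x_i$ for every $i$, i.e.\ iff the binary supports of $x$ and $y$ are disjoint. Applying Kummer to $\binom{x+y}{y}$ yields the same disjointness criterion for oddness, so the two binomials share the same parity.

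I do not anticipate any serious obstacle: the whole lemma is essentially an unpacking of Lucas and Kummer after reading off the binary digits of $2^p+x$ and $2^p-1-x$. The only subtle point is careful bookkeeping of the bit at position $p$, which is what the hypothesis $x,y\le 2^p-2$ is there to control. The extra assumptions $y>0$ and $x+y>0$ are not needed for the identity itself---both sides reduce to $1\equiv 1$ in the degenerate cases---and are presumably included only because that is the range in which the lemma will be applied in the proof of~\eqref{eq:bibi}.
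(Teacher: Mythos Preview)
Your argument for part~\eqref{it:binocong2} is fine and matches the paper's approach: both reduce to Lucas/Kummer and the observation that $x<y$ forces some bit where $y$ has a $1$ and $x$ (hence $2^p+x$ in the same low position) has a $0$.

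Part~\eqref{it:binocong1}, however, has a genuine gap. The lemma is stated for $x\in\Z$, with no lower bound on $x$; the only constraints are $x\le 2^p-2$, $y\le 2^p-2$, $y>0$, and $x+y>0$. Your ``key observation'' that $2^p-1-x$ is the bitwise complement of $x$ in its lowest $p$ bits is valid only when $0\le x<2^p$; for $x<0$ the quantity $2^p-1-x$ is $\ge 2^p$ and is not a $p$-bit complement of anything. Your remark that the hypotheses $y>0$ and $x+y>0$ ``are not needed for the identity itself'' is incorrect for precisely this reason: if one allows $x<0$ with $x+y\le 0$, the two sides can have different parities (e.g.\ $p=2$, $x=-4$, $y=2$ gives $\binom{7}{2}=21$ odd while $\binom{-2}{2}=0$).

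The paper treats the case $x<0$ separately, and this is exactly where part~\eqref{it:binocong2} and the hypothesis $x+y>0$ enter. Writing $z=-x-1\ge 0$, one has $\binom{2^p-1-x}{y}=\binom{2^p+z}{y}$; the hypothesis $x+y>0$ gives $z<y$, and $y\le 2^p-2<2^p$, so part~\eqref{it:binocong2} yields $\binom{2^p+z}{y}\equiv 0\pmod 2$. On the other side $\binom{x+y}{y}=0$ since $0<x+y<y$. Adding this paragraph to your proof would close the gap; for $x\ge 0$ your Lucas/Kummer argument is essentially the same as the paper's.
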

\begin{proof}
By Kummer's Theorem, a binomial coefficient $\binom{q}{t}$ with $0 \le t$ is odd if and only if there is a place in the binary representation where $q$ has $0$ and $t$ has $1$ and, when $0 \le t \le q$, if and only if there is a place in the binary representation where both $q-t$ and $t$ have $1$.

(a) For $\binom{2^p+x}{y} = 1 \mod 2$, the binary representation of $2^p+x$ must have a $1$ at all the places where the binary representation of $y$ does. But as $y < 2^p$, this implies that the binary representation of $x$ has a $1$ at all the places where the binary representation of $y$ does, which contradicts the fact that $y>x$.

(b) First suppose $x \ge 0$. Then $\binom{2^p-1-x}{y}$ is even if and only if there is a place in the binary representation where $2^p-1-x$ has $0$ and $y$ has $1$ if and only if there is a place in the binary representation where $x$ has $1$ and $y$ has $1$ if and only if $\binom{y+x}{y}$ is even.

Now let $x < 0$. So $\binom{y+x}{y}=0$. Denote $z=-x-1 \ge 0$. Then $\binom{2^p-1-x}{y}=\binom{2^p+z}{y}$ and $0 \le z<y \le 2^p-2$ by our assumption. By part (a), $\binom{2^p+z}{y}=\binom{z}{y} \mod 2$, and $\binom{z}{y} =0$ as $z < y$. So $\binom{y+x}{y}=\binom{2^p+z}{y} \mod 2$.
\end{proof}

To apply Lemma~\ref{l:binocong}\eqref{it:binocong1} to the binomial coefficients $\binom{2^p-1-(r-k+N-2l)}{r-k+l}$ from \eqref{eq:bibi} we need to check few inequalities. We have $r-k \ge a-k=\left\lceil \frac13(n-k+1)\right\rceil \ge \left\lceil \frac13(n-\left \lfloor \frac12 n \right\rfloor +1) \right\rceil = \left\lceil  \frac13 (\left\lceil  \frac12 n \right\rceil+1) \right\rceil \ge 1$ and so $r-k+l \ge 1$ and $(r-k+l)+(r-k+N-2l) \ge 1$. Furthermore, $r-k+l,r-k+N-2l \le r-k+N \le b-k+N = \lfloor  \frac12 n \rfloor +N-1$, and $\lfloor \frac12 n \rfloor +N-1= \lfloor \frac12 n \rfloor  + k-m-2 \le 2 \lfloor \frac12 n \rfloor -m-2 = 2 \lfloor 2^{p-1} + \frac12 m \rfloor-m-2 \le 2^{p}-2$. So the hypotheses of Lemma~\ref{l:binocong}\eqref{it:binocong1} are satisfied with $x=r-k+N-2l, y=r-k+l$. So Lemma~\ref{l:binocong}\eqref{it:binocong1} gives $\binom{2^p-1-(r-k+N-2l)}{r-k+l}=\binom{2(r-k)+N-l}{r-k+l} \mod 2$, for every $l=0, \dots, N$. Vandermonde's identity gives $\binom{2(r-k)+N-l}{r-k+l}=\sum_{i=0}^{N-l} \binom{N-l}{i}\binom{2(r-k)}{r-k+l-i}$, and hence the left-hand side of \eqref{eq:bibi} is congruent modulo $2$ to
\begin{align*}
    \sum_{l=0}^{N}\binom{N}{l}&\sum_{i=0}^{N-l}\binom{N-l}{i}\binom{2(r-k)}{r-k+l-i}= \sum_{i,l \ge 0; i+l \le N}\binom{N}{i,l,N-l-i}\binom{2(r-k)}{r-k+l-i} \\
    &= \sum_{i> l \ge 0; i+l \le N} \binom{N}{i,l,N-l-i}\bigg(\binom{2(r-k)}{r-k+l-i}
    + \binom{2(r-k)}{r-k+i-l} \bigg) \\
    &\qquad+ \sum_{i \ge 0; 2i \le N}  \binom{N}{i,i,N-2i}\binom{2(r-k)}{r-k} \\
    &= 0 \mod 2,
\end{align*}
as $\binom{2(r-k)}{r-k+l-i} =\binom{2(r-k)}{r-k+i-l}$ and $\binom{2(r-k)}{r-k}=2 \binom{2(r-k)-1}{r-k}$. This completes the proof of necessity.


To prove the \textbf{sufficiency} we explicitly produce, for any $2 \le k \le m$, a vector $x \in \Z_2^{b-a+1}$ such that $Ax=(1,0, \dots, 0)^t \in \Z_2^{k-1}$:
\begin{equation}\label{eq:x}
    x_j=\sum\nolimits_{s=0}^{p-1} \binom{m-k}{n-(a+j-1)-2^s}, \quad j=1, \dots, b-a+1. 
\end{equation}
By Lemma~\ref{l:matbino} we need to show that for all $i=1, \dots, k-1$,
\begin{equation}\label{eq:Ax}
\sum_{j=1}^{b-a+1} \bigg(\binom{n-(a+j-1)+2(i-1)}{(a+j-1)+(i-1)-k} \sum_{s=0}^{p-1} \binom{m-k}{n-(a+j-1)-2^s}\bigg) \mod 2 = \delta_{1i}.
\end{equation}
We first show that the expression on the left-hand side of \eqref{eq:Ax} can be rewritten as
\begin{equation*} 
\sum_{s=0}^{p-1} \sum_{j \in \Z} \binom{n-(a+j-1)+2(i-1)}{(a+j-1)+(i-1)-k}  \binom{m-k}{n-(a+j-1)-2^s} \mod 2,
\end{equation*}
so that there is no contribution from the values $j \le 0$ and $j \ge b-a+1$. The latter is easy: for the first binomial coefficient to be nonzero we need to have $n-(a+j-1)+2(i-1) \ge (a+j-1)+(i-1)-k$ which gives $2j \le n+k+i+1-2a \le n+2k-2a$, as $i \le k-1$, so $j \le \lfloor n/2\rfloor +k-a=b-a+1$. To prove the former, we first look at the second binomial coefficient, from which we get $m-k \ge n-(a+j-1)-2^s$, so $ j \ge n-a+1+k-m-2^s \ge n-\frac13(n+2k+1)-\frac23+1+k-m-2^s= \frac13(2^{p+1}+k-m-3 \cdot 2^s)$. Now if $s < p-1$ the expression on the right-hand side is positive, as $m \le 2^p$, and we are done. Suppose $s=p-1$. Then we have $j \ge \frac13(2^{p-1}+k-m)$, which still implies $j > 0$ unless $m=2^{p-1}+k+l, \; l \ge 0$, in which case we have $j \ge - \frac13 l$. Then
\[
a=\left\lceil \frac{2^p+m+2k+1}3\right\rceil=\left\lceil \frac{2^p+2^{p-1}+3k+l+1}3\right\rceil=2^{p-1}+k+\left\lceil\frac{l+1}3 \right\rceil
\] 
and the first binomial coefficient has the form $\binom{2^p+x}{y}$, where
\begin{align*}
x&=n-(a+j-1)+2(i-1)-2^p=m-(a+j-1)+2(i-1)\\
&=2^{p-1}+k+l-(a+j-1)+2(i-1)=l+1-\left\lceil(l+1)/3 \right\rceil+2(i-1)-j,\\
y&=(a+j-1)+(i-1)-k=2^{p-1}+\left\lceil(l+1)/3 \right\rceil +j+i-2.
\end{align*}
Note that as $i\ge 1$, we have $x\ge 0$ if $j\le 0$. Also if $j\le 0$, then as $i\le k-1$, we have $y\le 2^{p-1}+\left\lceil(l+1)/3 \right\rceil +k-3\le 2^{p-1}+l +k-2=m-2< 2^p$. Moreover, if $j\le 0$, then as $i\le k-1$, we have $
y-x = (2^{p-1}+\left\lceil (l+1)/3 \right\rceil +j+i-2)-(l+1-\left\lceil (l+1)/3 \right\rceil+2(i-1)-j)=2^{p-1}+2\left\lceil (l+1)/3 \right\rceil -(l+1)+2j-i\ge 2^{p-1}+2-(l+1)-k=2^{p}-m+1>0$.
So the  hypotheses of Lemma~\ref{l:binocong}\eqref{it:binocong2} are satisfied, and hence the binomial coefficient $\binom{2^p+x}{y}$ is even. So it remains to establish that
\begin{equation}\label{eq:Ax1a}
\sum_{s=0}^{p-1} \sum_{j \in \Z} \binom{n-(a+j-1)+2(i-1)}{(a+j-1)+(i-1)-k}  \binom{m-k}{n-(a+j-1)-2^s}  \mod 2 = \delta_{1i},
\end{equation}
for all $i=1, \dots, k-1$.

A clear advantage of \eqref{eq:Ax1a} is that it ``takes care of itself" -- we do not have to worry about the limits. Changing the summation variable in \eqref{eq:Ax1a} to $h=n-(a+j-1)-2^s$ we obtain that \eqref{eq:Ax1a} is equivalent to
\begin{equation}\label{eq:Ax2}
\sum_{s=0}^{p-1} \sum_{h \in \Z} \binom{2^s+2(i-1)+h}{n-2^s+(i-1)-k-h}  \binom{m-k}{h} \mod 2 = \delta_{1i}.
\end{equation}
Now for a polynomial $P \in \Z_2[t]$ and $l \in \Z$ we denote $\{P\}_l$ the coefficient of $t^l$ in $P$. Consider the polynomial $P_{x,y}(t)=(t^2+t)^x(t^2+t+1)^y$. We have
\begin{equation*}
    P_{x,y}(t)=\sum_{h \in \Z}\binom{y}{h}(t^2+t)^{x+h}=\sum_{h,s \in \Z}\binom{y}{h}\binom{x+h}{s}t^{x+h+s}= \sum_{l \in \Z} \sum_{h \in \Z}\binom{x+h}{l-x-h}\binom{y}{h}t^l,
\end{equation*}
so the left-hand side of \eqref{eq:Ax2} equals
\begin{align*}
    \sum_{s=0}^{p-1} \{P_{2^s+2(i-1),m-k}\}_{n+3(i-1)-k} &= \bigg\{\sum_{s=0}^{p-1} (t^2+t)^{2^s+2(i-1)}(t^2+t+1)^{m-k}\bigg\}_{n+3(i-1)-k}\\
    &= \bigg\{\sum_{s=0}^{p-1} (t^2+t)^{2^s}(t^2+t)^{2(i-1)}(t^2+t+1)^{m-k}\bigg\}_{n+3(i-1)-k} \\
    &= \{ (t^{2^{p}}+t)(t^2+t)^{2(i-1)}(t^2+t+1)^{m-k}\}_{n+3(i-1)-k}
\end{align*}
modulo $2$ (since as $(t^2+t)^{2^s}=t^{2^{s+1}}+t^{2^s}$ in $\Z_2[t]$ and so $\sum_{s=0}^{p-1} (t^2+t)^{2^s}=t^{2^{p+1}}+t \mod 2$). Now, if in the expansion of the latter polynomial we take $t$ from the first parentheses, then the maximal degree of $t$ in the resulting terms will be $1+4(i-1)+2(m-k) \le 2m  -1 + 3(i-1)- k < n+3(i-1)-k$, as $i \le k-1$ and $n=2^p+m, \; m \le 2^p$. It follows that
\begin{align*}
    \sum_{s=0}^{p-1} \{P_{2^s+2(i-1),m-k}\}_{n+3(i-1)-k} & = \{ t^{2^p}(t^2+t)^{2(i-1)}(t^2+t+1)^{m-k}\}_{n+3(i-1)-k} \\
    &= \{ (t+1)^{2(i-1)}(t^2+t+1)^{m-k}\}_{m+(i-1)-k}\\
    &=\sum_{l\in\Z} \{ (t+1)^{2(i-1)}\}_{i-1+l} \{(t^2+t+1)^{m-k}\}_{m-k-l}\\
    &= \{ (t+1)^{2(i-1)}\}_{i-1} \{(t^2+t+1)^{m-k}\}_{m-k} \mod 2,
\end{align*}
where the last equality follows from the symmetry: for the polynomial $f(t)=(t+1)^{2(i-1)}$ we have $f(t)=t^{2(i-1)}f(t^{-1})$, so $\{(t+1)^{2(i-1)}\}_{i-1+l} =\{ (t+1)^{2(i-1)}\}_{i-1-l}$, and similarly $\{(t^2+t+1)^{m-k}\}_{m-k-l}=\{(t^2+t+1)^{m-k}\}_{m-k+l}$.

Now if $i > 1$ we obtain $\{ (t+1)^{2(i-1)}\}_{i-1}=\binom{2(i-1)}{i-1} = 0 \mod 2$, as required. If $i=1$ we get $\{(t^2+t+1)^{m-k}\}_{m-k}=\{\sum_{l}\binom{m-k}{l}(t^2+t)^l\}_{m-k}= \{\sum_{l,h}\binom{m-k}{l}\binom{l}{h}t^{h+l}\}_{m-k}= \sum_{l}\binom{m-k}{l}\binom{l}{m-k-l}=\sum_{s}\binom{m-k}{s}\binom{m-k-s}{s}$, where $s=m-k-l$. The terms with $s < 0$ vanish, and the term with $s=0$ is $1$. For $s > 0$, consider the first place, counting from the right, where the binary expansion of $s$ has a $1$. Then by Kummer's Theorem, for $\binom{m-k}{s}$ to be nonzero, the binary expansion of $m-k$ must have a $1$ at the same place, so the binary expansion of $m-k-s$ will have zero at that place, thus $\binom{m-k-s}{s}=0$. Hence $\{(t^2+t+1)^{m-k}\}_{m-k}=1 \mod 2$, as required. This concludes the proof of Proposition~\ref{p:kerdm0n} and hence of Theorem~\ref{t:b3m0z2}\eqref{it:b3}.
\end{proof}

Note that one can extract from the above proof an explicit basis for the space of three-cocycles of $\m_0(n)$ (and hence for $H^3(\m_0(n))$). We have the following theorem.

\begin{theorem}\label{t:3cocyclesm0n}
For $n \ge 4, \; n=2^p+m, \; 0<m \le 2^p$ and for $2 \le k \le m$, define the numbers $a=\lceil(n+2k+1)/3\rceil, \; b=\lfloor n/2\rfloor +k-1$. Let $B_n$ be the set of elements of $\m_0(n)$ of the form
\begin{equation*}
    \sum_{r=a}^b\sum_{s=0}^{p-1} \binom{m-k}{n-r-2^s} F(e^{n+2k-2r,r},e^r) \! = \!\! \sum_{r=a}^b\sum_{s=0}^{p-1} \binom{m-k}{n-r-2^s} \! \sum_{l \ge 0} D^l(e^{n+2k-2r} \wedge e^r) \wedge e^{r+l+1},
\end{equation*}
for $2 \le k \le m$, where $D$ is the linear operator defined by \eqref{eq:defD} and the binomial coefficients are taken modulo $2$. Then classes of the elements of the set
\begin{equation*}
    \{e^{1,i-1,i}, \quad 2+\lfloor n/2\rfloor \le i \le n\} \cup \bigcup_{4 \le t \le n} B_t.
\end{equation*}
is a basis for the cohomology space $H^3(\m_0(n)), \; n \ge 4$, over the field $\Z_2$.
\end{theorem}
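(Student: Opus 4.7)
The plan is to derive the theorem by combining the already-established kernel description with an explicit analysis of $H^3(\m_0(n))$ as a quotient. From \eqref{eq:dm0} and \eqref{eq:defD}, writing any $\omega \in \Lambda^3$ as $\omega = e^1 \wedge \alpha + \omega'$ with $\alpha \in \Lambda^2(\h)$, $\omega' \in \Lambda^3(\h)$ gives $d\omega = e^1 \wedge D\omega'$, so $\ker d|_{\Lambda^3} = e^1 \wedge \Lambda^2(\h) \oplus \ker D|_{\Lambda^3(\h)}$; an analogous splitting on $\Lambda^2$ together with $d(e^1 \wedge v) = 0$ for $v \in \h^*$ gives $\Im(d\colon \Lambda^2 \to \Lambda^3) = e^1 \wedge D(\Lambda^2(\h))$. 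The coboundary space thus sits inside the first summand, so
\[
H^3(\m_0(n)) \cong e^1 \wedge \bigl(\Lambda^2(\h)/D(\Lambda^2(\h))\bigr) \oplus \ker D|_{\Lambda^3(\h)},
\]
and it suffices to exhibit a basis for each summand.

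For the second summand I iterate Proposition~\ref{p:kerdm0n} on $n$. The base case $n=4$ gives $\ker D|_{\Lambda^3(\h)} = \langle e^{234}\rangle$, matching the single element of $B_4$. At each subsequent step from $\m_0(t-1)$ to $\m_0(t)$, Proposition~\ref{p:kerdm0n} supplies $m_t - 1$ new generators, one for each $k \in \{2, \dots, m_t\}$, where $t = 2^{p_t} + m_t$ with $0 < m_t \le 2^{p_t}$. An explicit such generator is provided by the sufficiency part of the proof of Lemma~\ref{l:matbino}: the vector $x$ from \eqref{eq:x} gives coefficients $\mu_r = \sum_{s=0}^{p_t-1}\binom{m_t-k}{t-r-2^s}$, and the corresponding three-cocycle $\omega = \sum_{r=a}^{b}\mu_r F(e^{t+2k-2r, r}, e^r)$ is precisely the generic element of $B_t$ in the statement. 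Cocycles from different $B_t$ are linearly independent since every element of $B_t$ contains the factor $e^t$ (through its component $e^t \wedge F(e^k,e^k)$), whereas elements of $B_{t'}$ with $t' < t$ lie in $\Lambda^3(e_2,\dots,e_{t'})$ and do not involve $e^t$.

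For the first summand I show that the monomials $e^{i-1,i}$ with $\lfloor n/2\rfloor + 2 \le i \le n$ form a basis of $\Lambda^2(\h)/D(\Lambda^2(\h))$. The key identity is the sliding relation
\[
D(e^{j+1,\,2i-1-j}) = e^{j,\,2i-1-j} + e^{j+1,\,2i-2-j},
\]
valid whenever $e^{j+1,\,2i-1-j} \in \Lambda^2(\h)$, which iteratively shows that in each degree $2i-1$ with $i \ge \lfloor n/2\rfloor + 2$ every monomial of $\Lambda^2_{2i-1}(\h)$ is congruent to $e^{i-1,i}$ modulo $D(\Lambda^2_{2i}(\h))$. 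Since $\ker D|_{\Lambda^2(\h)}$ is spanned by the odd-degree forms $F(e^j,e^j)$, the restriction $D|_{\Lambda^2_{2i}(\h)}$ is injective; the dimension count $\dim \Lambda^2_{2i-1}(\h) = n-i+1$, $\dim \Lambda^2_{2i}(\h) = n-i$ then forces the cokernel in degree $2i-1$ to be exactly one-dimensional and represented by $e^{i-1,i}$. Classes in distinct degrees are automatically linearly independent by the graded structure of $D$, and the identity $\lceil n/2 \rceil - 1 = \lfloor(n+1)/2\rfloor - 1 = \dim \ker D|_{\Lambda^2(\h)} = \dim(\Lambda^2(\h)/D(\Lambda^2(\h)))$ shows they span the whole cokernel.

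The two resulting families of classes are mutually linearly independent because $\bigcup B_t$ lies in $\Lambda^3(\h)$ while the classes $e^{1,i-1,i}$ lie in $e^1 \wedge \Lambda^2(\h)$, and the coboundary subspace is contained in the latter. The main technical subtlety I expect is the careful verification of the index ranges in the sliding argument: one must check that the admissible range $\max\{2,\,2i-1-n\} \le j \le i-2$ keeps every intermediate monomial $e^{j+1,\,2i-1-j}$ inside $\Lambda^2(\h)$, with the most delicate case being the final step via $D(e^{i-1, i+1})$, which requires $i+1 \le n$ for $i \le n-1$, while for $i = n$ the space $\Lambda^2_{2n-1}(\h) = \langle e^{n-1,n}\rangle$ is one-dimensional and needs no sliding. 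Once this book-keeping is settled, the rest of the argument is routine.
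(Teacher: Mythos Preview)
Your proof is correct and follows essentially the same overall architecture as the paper's: split $H^3$ into the $e^1$-part and the $\ker D$-part, handle the latter by iterating Proposition~\ref{p:kerdm0n} together with the explicit solution vector~\eqref{eq:x}, and handle the former by a dimension count. The only real difference is in how you show that each $e^{1,i-1,i}$ is not a coboundary. The paper observes directly that for $2\le j < l$ with $j+l=2i$ one has $d(e^{jl})=e^{1,j-1,l}+e^{1,j,l-1}$, a sum of exactly two monomials, so any homogeneous coboundary in $e^1\wedge\Lambda^2(\h)$ is a sum of an even number of monomials and therefore cannot equal the single monomial $e^{1,i-1,i}$. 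Your sliding argument plus the degree-wise injectivity/dimension computation reaches the same conclusion by a slightly longer but equally valid route; the paper's parity trick simply shortcuts the need to track the index bounds you flag as the ``main technical subtlety''.
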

\begin{proof}
We start with the elements $e^{1,i-1,i}, \; 2+\lfloor n/2\rfloor \le i \le n$. They are linearly independent cocycles and the space spanned by them has the correct dimension, which is the codimension of the space of coboundaries in the space spanned by $e^{1ij}, \; 1<i<j \le n$, by Proposition~\ref{p:H1H2m0n}. It suffices to show that neither of them is a coboundary. But if it were so, then by homogeneity we would have had that $e^{1,i-1,i}$ is the coboundary of a linear combination of the elements $e^{kl}, \; 2 \le k < l \le n, \; k+l=2i$, that is, of the elements $e^{i-k,i+k}, \; k=1, \dots, n-i$ (note that as $i \ge 2+\lfloor n/2\rfloor $, we have $2i-n-1 \ge 2$). But the coboundary of any such element is the sum of exactly two monomials, $e^{1,i-k-1,i+k}+e^{1,i-k,i+k-1}$, so the coboundary of any linear combination of them is a sum of an even number of monomials, hence cannot be equal to $e^{1,i-1,i}$.

As to the element from the sets $B_t$, no linear combination of them is a coboundary (as any coboundary is a multiple of $e^1$). Moreover, from  Proposition~\ref{p:kerdm0n} (both the statement and the proof) it follows that they form a basis for the kernel of $D$, where the form of the elements given in the statement follows from Lemma~\ref{l:matbino} and Equation~\eqref{eq:x}.
\end{proof}


\begin{example}\label{ex:3cocyclesm0n}
For $n = 4, \dots, 12$, the space of 3-cocycles of $\m_0(n)$ is spanned by the three-forms $e^{1ij}$, $1<i<j\le n$, and the three-forms from the following table in the rows labelled by the numbers less than or equal to $n$.
{\renewcommand{\arraystretch}{1.25}
\begin{equation*}
\begin{tabular}{|c|l|}
  \hline
    4 & $e^{234}$ \\ \hline
    5 &  \\ \hline
    6 & $e^{245}+e^{236}$ \\ \hline
    7 & $e^{345}+e^{246}+e^{237}, \; e^{356}+e^{257}+e^{347}$ \\ \hline
    8 & $e^{256}+e^{247}+e^{238}, \; e^{456}+e^{357}+e^{258}+e^{348}, e^{467}+e^{278}+e^{368}+e^{458}$ \\ \hline
    9 &  \\ \hline
    10 & $e^{267}+e^{258}+e^{249}+e^{23(10)}$ \\ \hline
    11 & \parbox{11cm}{\vskip 2pt $e^{367}+e^{268}+e^{358}+e^{349}+e^{24(10)}+e^{23(11)}, \\ e^{378}+e^{279}+e^{369}+e^{35(10)}+e^{25(11)}+e^{34(11)}$ \vskip 3pt }\\ \hline
    12 & \parbox{11cm}{\vskip 2pt $e^{467}+e^{368}+e^{458}+e^{269}+e^{25(10)}+e^{24(11)}+e^{23(12)}, \\ e^{478}+e^{289}+e^{379}+e^{469}+e^{45(10)}+e^{35(11)}+e^{25(12)}+e^{34(12)}, \\ e^{489}+e^{38(10)}+e^{47(10)}+e^{28(11)}+e^{46(11)}+e^{27(12)}+e^{36(12)}+e^{45(12)}$ \vskip 3pt } \\
  \hline
\end{tabular}
\end{equation*}
}
\end{example}

\section{Cohomology of $\m_2$}
\label{s:m2}

In this section, we compute the cohomology of the infinite-dimensional Lie algebra $\m_2$ given by \eqref{eq:m2def}:
\begin{equation*}
\m_2 = \Span (e_1, e_2, \dots),  \qquad  [e_1, e_i] = e_{i+1}, \; i > 1, \quad [e_2, e_j] = e_{j+2}, \; j > 2,
\end{equation*}
hence completing the proof of Theorem~\ref{t:main}. First we state the following result for the truncation $\m_2(n)$.

\begin{corollary}\label{cor:m2n}
The first three Betti numbers of the Lie algebra $\m_2(n), \; n \ge 5$, over $\Z_2$ are given by $b_1(\m_2(n))=2, \; b_2(\m_2(n))=[\tfrac12(n+1)]$, and
    \begin{equation*}
        b_3(\m_2(n)) = \tfrac13 (2^p-1)(2^{p-1}-1) + \tfrac12 m(m-1) + [\tfrac12 (n-1)],
    \end{equation*}
    where $n=2^p+m, \; 0<m \le 2^p$.
\end{corollary}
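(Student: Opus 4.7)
The plan is to deduce Corollary~\ref{cor:m2n} directly from Theorem~\ref{t:b3m0z2} combined with Tsartsaflis's duality result \cite[Theorem~1]{Ts}. As recalled in the Introduction, that result asserts that in characteristic two every Lie algebra of the Vergne class admits a dual, non-isomorphic algebra with the same Betti numbers, and in our setting $\m_0(n)$ and $\m_2(n)$ form exactly such a dual pair.

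First I would verify that the pair $(\m_0(n),\m_2(n))$ falls within the scope of \cite[Theorem~1]{Ts}. This is a matter of comparing the presentations \eqref{eq:m0ndef} and \eqref{eq:m2ndef} with the Vergne-class dualization construction from \cite{Ts}: the bracket $[e_1,e_i]=e_{i+1}$ is common to both algebras, while the presence of $[e_2,e_j]=e_{j+2}$ only in $\m_2(n)$ places the two algebras on the opposite sides of the duality. The hypothesis $n\ge 5$ is exactly what ensures that the bracket $[e_2,e_3]=e_5$ in $\m_2(n)$ is nontrivial, so that the two algebras are genuinely distinct and the duality has content.

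Having established the hypotheses of \cite[Theorem~1]{Ts}, one obtains the equality $b_q(\m_0(n))=b_q(\m_2(n))$ for every $q\ge 0$. Specialising to $q=1,2,3$ and substituting the formulas from Theorem~\ref{t:b3m0z2}\eqref{it:b1}--\eqref{it:b3} yields the three expressions stated in the corollary.

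There is no substantial obstacle here: the combinatorial work has already been done inside Proposition~\ref{p:kerdm0n} and the proof of Theorem~\ref{t:b3m0z2}\eqref{it:b3}, while \cite[Theorem~1]{Ts} is invoked purely as a black box that transports the numerical equalities across the Vergne-class duality. Accordingly the proof will be a short one, consisting essentially of the verification of duality plus a citation.
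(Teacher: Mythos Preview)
Your proposal is correct and matches the paper's proof essentially verbatim: invoke \cite[Theorem~1]{Ts} to identify the Betti numbers of $\m_2(n)$ with those of $\m_0(n)$, then read off the values from Theorem~\ref{t:b3m0z2}. The paper's version is even shorter, omitting the explicit verification of the duality hypotheses and the discussion of why $n\ge 5$ is needed.
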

\begin{proof}
By \cite[Theorem~1]{Ts}, the Betti numbers of $\m_2(n)$ and of $\m_0(n)$ over $\zz$ are the same. The claim then follows from Theorem~\ref{t:b3m0z2}.
\end{proof}

\begin{remark} \label{rem:basm2n}
It is easy to see that $H^1(\m_2(n))$ is spanned by the cohomology classes of $e^1$ and $e^2$ and that $H^2(\m_2(n))$ is spanned by the cohomology classes of the elements $e^{1n}+e^{2,n-1}, \; e^{i,i+1}+e^{i-1,i+3} + \dots + e^{2, 2i-1}$, where $2 \le i \le \tfrac12 (n+1)$. A basis for $H^3(\m_2(n))$ can be found by applying the map $f$ from \cite[Definition~3]{Ts} (see below) to the elements of the basis for $H^3(\m_0(n))$ constructed in Theorem~\ref{t:3cocyclesm0n}; the resulting basis is the same.
\end{remark}

In the infinite-dimensional case, we follow the construction of \cite{Ts}. As in the Introduction, let $V = \Span(e_1, e_2, \dots)$, and define the operator $D_1$ on $V^*$ by $D_1e^1=D_1e^2=0$, $D_1e^i = e^{i-1}$, for $i > 2$, and then extend it to $\Lambda(V)$ as a derivation. Note that any $\omega \in \Lambda^q(V), \; q \ge 2$, has a unique presentation in the form $\omega = e^1 \wedge \xi + e^2 \wedge \eta + \zeta$, where $\xi \in \Lambda^{q-1}(e_2,e_3,\dots), \; \eta \in \Lambda^{q-1}(e_3,e_4,\dots)$ and $\zeta \in \Lambda^{q}(e_3,e_4,\dots)$. Note that $\xi, \eta$ and $\zeta$ linearly depend on $\omega$.

Define the linear map $f$ on $\Lambda(V)$ by setting $f(e^1 \wedge \xi + e^2 \wedge \eta + \zeta)=e^1 \wedge \xi + e^2 \wedge (\eta +D_1 \xi) + \zeta$ on the forms of rank at least two, and taking it to be the identity on $V^*$. The following properties of $f$ are easy to check:
\begin{itemize}
  \item $f$ is an involution, hence a bijection, and $f^{-1}=f$,
  \item the restriction of $f$ to $\Lambda(e_2,e_3,\dots)$ is the identity,
  \item $f$ preserves the homogeneous components: $f(\Lambda^q_k(V))=\Lambda^q_k(V)$.
\end{itemize}

The main feature of $f$ is the fact that it \emph{interweaves} the differentials of $\m_0$ and $\m_2$. More precisely, consider $\m_0$ and $\m_2$ to have the same underlying linear space $V$, but to be defined by the brackets \eqref{eq:m0def} and \eqref{eq:m2def} respectively relative to the same basis $\{e_1, e_2, \dots\}$ for $V$. Then for all $\omega \in \Lambda(V)$, we have
\begin{equation}\label{eq:interv}
f d_0 \omega = d_2 f \omega, \qquad f d_2 \omega = d_0 f \omega,
\end{equation}
where $d_0$ and $d_2$ are the differentials on $\m_0$ and $\m_2$ respectively. The first equation is easily verified for $\omega=e^i$, and the proof for $\omega \in \Lambda^q(V), \; q \ge 2$, is identical to the proof of \cite[Proposition~1]{Ts}. The second one follows, as $f$ is an involution.

\begin{proof}[Proof of Theorem~\ref{t:main}]
By \eqref{eq:interv}, $f$ bijectively maps cocycles and coboundaries of $\m_0$ to cocycles and coboundaries of $\m_2$ respectively. It follows that $H^*(\m_2)$ is spanned by the classes of the images under $f$ of the elements \eqref{eq:classesm0}. As $f$ acts on all those elements as the identity, we obtain that the basis for $H^*(\m_2)$ is the set of the classes of the same cocycles.

The fact that the multiplicative structure is preserved follows from the fact that the restriction of $f$ to $\Lambda(e_2,e_3,\dots)$ is the identity and that multiplication by $e^1$ is trivial in both $H^*(\m_0)$ and $H^*(\m_2)$. Multiplication by $e^1$ is trivial in $H^*(\m_0)$ because $e^1 \wedge \omega$ is a $d_0$-coboundary, for any $\omega$ (see the proof of Theorem~\ref{t:main_H_m_0}). To see that multiplication by $e^1$ is trivial in $H^*(\m_2)$, notice that for any $\omega$ in the list \eqref{eq:classesm0}, one has $D\omega=0$  (which is essentially assertion~\eqref{it:kerD} of Lemma~\ref{l:kerD}), and so $f(e^1 \wedge \omega)=e^1 \wedge \omega$, which is then a $d_2$-coboundary, as $f$ maps coboundaries to coboundaries.
\end{proof}

\vskip.5cm
\emph{Acknowledgements.} We are very grateful to Grant Cairns for drawing our attention to the question, for many useful discussions and for his help which improved the presentation of this paper.

	\bibliographystyle{amsplain}
	\bibliography{maxcnt21}

\def\cprime{$'$}
\providecommand{\bysame}{\leavevmode\hbox to3em{\hrulefill}\thinspace}
\providecommand{\MR}{\relax\ifhmode\unskip\space\fi MR }
\providecommand{\MRhref}[2]{%
  \href{http://www.ams.org/mathscinet-getitem?mr=#1}{#2}
}
\providecommand{\href}[2]{#2}
\begin{thebibliography}{10}

\bibitem{OEIS}
\emph{The {O}n-{L}ine {E}ncyclopedia of {I}nteger {S}equences}, published
  electronically at \url{http://oeis.org/}, 2015.

\bibitem{AS}
Grant~F. Armstrong and Stefan Sigg, \emph{On the cohomology of a class of
  nilpotent {L}ie algebras}, Bull. Austral. Math. Soc. \textbf{54} (1996),
  no.~3, 517--527.

\bibitem{Bord}
M.~Bordemann, \emph{Nondegenerate invariant bilinear forms on nonassociative
  algebras}, Acta Math. Univ. Comenian. (N.S.) \textbf{66} (1997), no.~2,
  151--201.

\bibitem{CJ}
Grant Cairns and Sebastian Jambor, \emph{The cohomology of the {H}eisenberg
  {L}ie algebras over fields of finite characteristic}, Proc. Amer. Math. Soc.
  \textbf{136} (2008), no.~11, 3803--3807.

\bibitem{max1}
A.~Caranti, S.~Mattarei, and M.~F. Newman, \emph{Graded {L}ie algebras of
  maximal class}, Trans. Amer. Math. Soc. \textbf{349} (1997), no.~10,
  4021--4051.

\bibitem{max2}
A.~Caranti and M.~F. Newman, \emph{Graded {L}ie algebras of maximal class.
  {II}}, J. Algebra \textbf{229} (2000), no.~2, 750--784.

\bibitem{Fial}
Alice Fialowski, \emph{On the classification of graded {L}ie algebras with two
  generators}, Moscow Univ. Math. Bull. \textbf{38} (1983), no.~2, 76--79.

\bibitem{FM}
Alice Fialowski and Dmitri Millionschikov, \emph{Cohomology of graded {L}ie
  algebras of maximal class}, J. Algebra \textbf{296} (2006), no.~1, 157--176.

\bibitem{goncharova}
L.~V. Gon{\v{c}}arova, \emph{Cohomology of {L}ie algebras of formal vector
  fields on the line}, Functional Anal. Appl. \textbf{7} (1973), no.~2, 91--97.

\bibitem{max3}
G.~Jurman, \emph{Graded {L}ie algebras of maximal class. {III}}, J. Algebra
  \textbf{284} (2005), no.~2, 435--461.

\bibitem{million}
Dmitri~V. Millionschikov, \emph{Graded filiform {L}ie algebras and symplectic
  nilmanifolds}, Geometry, topology, and mathematical physics, Amer. Math. Soc.
  Transl. Ser. 2, vol. 212, Amer. Math. Soc., Providence, RI, 2004,
  pp.~259--279.

\bibitem{SZ}
Aner Shalev and Efim~I. Zelmanov, \emph{Narrow {L}ie algebras: a coclass theory
  and a characterization of the {W}itt algebra}, J. Algebra \textbf{189}
  (1997), no.~2, 294--331.

\bibitem{skol}
Emil Sk{\"o}ldberg, \emph{The homology of {H}eisenberg {L}ie algebras over
  fields of characteristic two}, Math. Proc. R. Ir. Acad. \textbf{105A} (2005),
  no.~2, 47--49.

\bibitem{Ts}
Ioannis Tsartsaflis, \emph{On the {B}etti numbers of filiform {L}ie algebras
  over fields of characteristic two}, \url{http://arxiv.org/1511.03132}, 2015,
  [math.RA].

\bibitem{Vergne}
Mich{\`e}le Vergne, \emph{Cohomologie des alg\`ebres de {L}ie nilpotentes.
  {A}pplication \`a l'\'etude de la vari\'et\'e des alg\`ebres de {L}ie
  nilpotentes}, Bull. Soc. Math. France \textbf{98} (1970), 81--116.

\end{thebibliography}

\end{document}